\date{\today}
\newtheorem{theorem}{Theorem}
\newtheorem{remark}{Remark}
\newtheorem{lemma}{Lemma}
\newtheorem{proposition}{Proposition}
\newtheorem{corollary}{Corollary}
\newtheorem{definition}{Definition}
\begin{document}
\title[Piecewise linear Lorenz maps]
{Renormalization and conjugacy of piecewise linear Lorenz maps}

\author[Hong-Fei Cui]{Hong-Fei Cui}

\address{Wuhan Institute of Physics and Mathematics, The Chinese Academy of Sciences, Wuhan 430071, P. R. China,}

\email{cuihongfei05@mails.gucas.ac.cn}

\author[Yi-Ming Ding]{Yi-Ming Ding}

\address{Wuhan Institute of Physics and Mathematics, The Chinese Academy of Sciences, Wuhan 430071, P. R. China,}

\email{ ding@wipm.ac.cn}

\thanks{Corresponding author:Yi-Ming Ding.}
\thanks{{\it Mathematical classification (2000):} 37E05,  37F25.}
\thanks{Keywords and Phrases: periodic
renormalization, conjugacy, dichotomy, first exit decomposition}

\maketitle


\medskip

 \medskip

\begin{abstract}
For each piecewise linear Lorenz map that expand on average, we show
that it admits a dichotomy: it is either periodic renormalizable or
prime. As a result, such a map is conjugate to a
$\beta$-transformation.
\end{abstract}


\maketitle
\section{Introduction}\ \

\setcounter{equation}{0} Lorenz maps are one-dimensional maps with a
single discontinuity, which  arise  as Poincar\'e return maps for
flows on branched manifolds that model the strange attractors of
Lorenz systems. More precisely, $f:I \to I$ is a {\em Lorenz map} if
there is a point $c$ in the interior of the interval $I$ and $f$ is
continuous and increasing on both sides of $c$, and $f(\{c_-,\
c_+\}) \to \partial I$, where $f(c_+)$ and $f(c_-)$ are the one side
limits of $f$ at $c$. We are interested with piecewise linear Lorenz
maps of the form
\begin{equation}\label{LinearLorenzMap}
f_{a,b,c}(x)= \left \{ \begin{array}{ll}
ax+1-ac & x \in [0,\  c) \\
b(x-c) & x \in (c,\  1].
\end{array}
\right.
\end{equation}
The average slope of $f_{a,b,c}$ is $\int f'_{a,b,c}(x)
dx=ac+b(1-c)$. We say that $f_{a,b,c}$ {\it expand on average} if
the average slope $ac+b(1-c)$ is greater than $1$. It is easy to see
that the average slope is greater than $1$ if and only if
$f_{a,b,c}(0)<f_{a,b,c}(1)$. We are concerned with the
renormalization and conjugacy of piecewise linear Lorenz map that
expand on average. Denote by $\mathcal{L}$ as the set of piecewise
linear Lorenz maps that expand on average. Note that for $f_{a,b,c}
\in \mathcal{L}$ we may have $a<1<b$ or $a>1>b$ because we only
assume $ac+b(1-c)>1$. In both cases, $f_{a,b,c}$ is contractive on
some interval.

The map $T_{\beta, \alpha}$ defined by
$$
T_{\beta, \alpha}=\beta x + \alpha \ \ \ \mod  1
$$
is called a $\beta$-transformation (see \cite{G}). When $1 < \beta
\leq 2$, $0 \le \alpha<1$,  $T_{\beta, \alpha}=f_{\beta, \beta, c}$
with $c=(1-\alpha)/\beta$.

The study of $\beta$-transformation goes back to R$\acute{e}$nyi.
Based on bounded distortion principe, R$\acute{e}$nyi proved that
$\beta$-transformation admits an acip (absolutely continuous
invariant probability measure with respect to the Lebesgue measure).
Gelfond \cite{Gel} and Parry \cite{P1, P2} obtained the expression
of the density of the acip. Flatto and Lagarias \cite{FL1, FL2, FL3}
studied the lap counting functions. For $f \in \mathcal {L}$, we
proved in \cite{DFY} that such a map admits an ergodic acip because
there exists a positive integer $n$ so that
$(f^n_{a,b,c})'(x)>\lambda>1$ for all $x\in I$ except countable
points. Such a map is {\it expanding} in the sense that $\cup_{n \ge
0}f^{-n}(c)$ is dense in $I$.

\subsection{Renormalization of expanding Lorenz map}\ \

Renormalization is a central concept in contemporary dynamics. The
idea is to study the small-scale structure of a class of dynamical
systems by means of a renormalization operator $R$ acting on the
systems in this class. This operator is constructed as a rescaled
return map, where the specific definition depends essentially on the
class of systems. A Lorenz map $f:I \to I$ is said to be {\it
renormalizable} if there is a proper subinterval $[u,\ v] \ni c$ and
integers $\ell, r>1$ such that the map $g: [u,\ v] \to [u,\ v]$
defined by
\begin{equation} \label{renormalization}
g(x)=\left \{
\begin{array}{ll}
f^{\ell}(x) & x \in [u,\  c), \\
f^{r}(x) & x \in (c,\  v],
\end{array}
\right.
\end{equation}
is itself a Lorenz map on $[u,\ v]$. The interval $[u, \ v]$ is
called the {\it renormalization interval}. If $f$ is not
renormalizable, it is said to be {\it prime}.

A renormalization $g=(f^{\ell}, f^{r})$ of $f$ is said to be {\it
minimal} if for any other renormalization $(f^{\ell'}, f^{r'})$ of
$f$ we have $ \ell' \ge \ell$ and $r' \ge r$ (e.g. \cite{GS, MM}).
It is not an easy problem to determine wether $f$ is renormalizable
or not. In fact, it is impossible to check if $f$ is prime or not in
finite steps, because $\ell$ and $r$ in (\ref{renormalization}) may
be large.

The renormalization theory of expanding Lorenz maps is well
understood (see for example, in \cite{D, GS, MM}). We recall some
results from \cite{D} for completeness. Let $f$ be an expanding
Lorenz map. A subset $E$ of $I$ is completely invariant under $f$ if
$f(E)=f^{-1}(E)=E$, and it is proper if $E \neq I$. According to
Theorem A in \cite{D}, there is a one-to-one correspondence between
the renormalizations and proper completely invariant closed sets of
$f$. In fact, let $E$ be a proper completely invariant closed set of
$f$, put
\begin{equation}
\label{periodic pt} e_{-}=\sup\{x\in E: x<c\},\ \ \ \ \
e_{+}=\inf\{x\in E: x>c\},
\end{equation}
$\ell$ and $r$ be the maximal integers so that $f^{\ell}$ and
 $f^r$ is continuous on $(e_-,\ c)$ and $(c,\ e_+)$, respectively.
Then we have
\begin{equation} \label{periodic pts}
f^{\ell}(e_-)=e_-,\ \ \ \ f^r(e_+)=e_+,
\end{equation}
 and the map
\begin{equation} \label{completely invariant renormalization}
R_Ef(x)=\left \{ \begin{array}{ll}
f^{\ell}(x) & x \in [f^{r}(c_+),\   c) \\
f^r(x) & x \in (c, \ f^{\ell}(c_-) ]
\end{array}
\right.
\end{equation}
is a renormalization  of $f$.

So a possible way to describe the renormalizability of $f$ is to
look for the {\it minimal completely invariant closed set} of $f$.
The minimal completely invariant closed set relates to the periodic
orbit with minimal period of $f$. Suppose the minimal period of the
periodic points of $f$ is $\kappa$. It is easy to see that $f$ is
prime if $\kappa=1$ or $\kappa=\infty$. If $1<\kappa<\infty$, then
$f$ admits unique $\kappa$-periodic orbit $O$. Put
$D=\overline{\bigcup_{n\ge 0}f^{-n}(O)}$. Then we have the following
statements (see Theorem B in \cite{D}):
\begin{enumerate}
\item $D$ is the minimal completely invariant closed set of $f$.
\item $f$ is renormalizable if and only if $D \neq I$. If $f$ is
renormalizable, then $R_D$, the renormalization associated to $D$,
is the minimal renormalization of $f$.
\item We have the following trichotomy:
 i) $D=I$, ii) $D=O$, iii) $D$ is a Cantor set.
\end{enumerate}

So the minimal renormalizaion of renormalizable expanding Lorenz map
always exists. We can define a renormalization operator $R$ from the
set of renormalizable expanding Lorenz maps to the set of expanding
Lorenz maps (\cite{D, GS}). For each renormalizable expanding Lorenz
map, we define $Rf$ to be the minimal renormalization map of $f$.
For $n>1$, $R^nf=R(R^{n-1}f)$ if $R^{n-1}f$ is renormalizable. And
$f$ is $m$ ($0 \le m \le \infty$) {\it times renormalizable} if the
renormalization process can proceed $m$ times exactly.  For $0 <i
\le m$, $R^if$  is the $i$th renormalization of $f$.

\begin{definition}Let $f$ be an expanding Lorenz map.
The minimal renormalization is said to be {\it periodic} if the
minimal completely invariant closed set $D=O$, where $O$ is the
periodic orbit with minimal period of $f$. And the $i$th
renormalization $R^if$ is {\it periodic} if it is a periodic
renormalization of $R^{i-1}f$.
\end{definition}

The periodic renormalization is interesting because
$\beta$-transformation can only be renormalized periodically  (see
\cite{G}). This kind of renormalization was studied by
Alsed$\grave{a}$ and Falc$\grave{o}$ \cite{AF}, Malkin \cite{M}. It
was called phase locking renormalization in \cite{AF} because it
appears naturally in Lorenz map whose rotational interval
degenerates to a rational point.

Let $f$ be an expanding Lorenz map with a discontinuity $c$, $P_L$
be the largest $\kappa-$periodic point less than $c$ and $P_R$ be
the smallest $\kappa-$periodic point greater than $c$.  Then we have
 the following statements (\cite{D}):
\begin{enumerate}
\item The minimal renormalization of $f$ is periodic if and only if
\begin{equation} \label{periodic renormalization}
[f^{\kappa}(c_+),\ f^{\kappa}(c_-)] \subseteq [f^{\kappa}(P_L),\
f^{\kappa}(P_R)].
\end{equation}

\item One can check if the minimal renormalization of $f$ is periodic or not in following
steps:
\begin{itemize}
\item Find the minimal period $\kappa$ of $f$ by considering the preimages of
$c$, see Lemma \ref{minimal period};

\item Find the $\kappa$-periodic orbit;

\item Check if the inclusion (\ref{periodic renormalization}) holds
or not.

\end{itemize}
\end{enumerate}

So the periodic renormalization in Lorenz map plays a similar role
as the period-doubling renormalization in unimodal map.

\subsection{Main result and ideas of proof}

The main purpose of this note is to characterize the
renormalizations of $f \in \mathcal {L}$.

\vspace{0.5cm} {\bf Main Theorem.} {\it Let $f \in \mathcal {L}$,
then each renormalization of $f$ is periodic. Furthermore, $f$ is
conjugate to a $\beta$-transformation.} \vspace{0.5cm}

Follows from Milnor and Thurston \cite{MT}, a Lorenz map $f$ is
semi-conjugate to a $\beta$-transformation. According to Parry
\cite{P3}, $f$ is conjugate to a $\beta$-transformation if $f$ is
strongly transitive. Since an expanding Lorenz map is strongly
transitive if and only if it is prime \cite{D}, it is interesting to
know when a renormalizable expanding Lorenz map is conjugate to a
$\beta$-transformation.

Periodic renormalization is relevant to the conjugacy problem.
Glendinning \cite{G} showed that an expanding Lorenz map is
conjugate to a $\beta$-transformation if its renormalizations admit
some special forms. In our words, he obtained the following
Proposition.

\begin{proposition} \label{Glendinning}
 (\cite{G}) An expanding Lorenz map $f$ is conjugate to a
$\beta$-transformation if and only if $f$ is finitely renormalizable
and each renormalization of $f$ is periodic.
\end{proposition}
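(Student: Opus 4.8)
The plan is to translate the statement into symbolic dynamics and exploit that, for \emph{expanding} Lorenz maps, the kneading invariant $(\underline{k}_-,\underline{k}_+)$ --- the pair of itineraries of $c_-$ and $c_+$ --- is a complete conjugacy invariant. Indeed, expanding means $\bigcup_{n\ge 0}f^{-n}(c)$ is dense, so $f$ has no wandering intervals and the itinerary map is injective off a countable set; combined with Milnor--Thurston theory this shows that two expanding Lorenz maps are topologically conjugate (by an orientation-preserving homeomorphism respecting the position of $c$) if and only if they have the same kneading invariant (cf. \cite{MT, GS}). The $\beta$-transformations are exactly the maps whose kneading invariant satisfies Parry's admissibility condition for a $\beta$-expansion. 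So the proposition reduces to the combinatorial assertion that the kneading invariant of $f$ is of $\beta$-transformation type if and only if $f$ is finitely renormalizable with every renormalization periodic, and I would prove the two implications separately.

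For the forward implication, suppose $f$ is conjugate to a $\beta$-transformation $T=T_{\beta,\alpha}$ with $1<\beta\le 2$. Renormalization is defined dynamically, so an orientation-preserving conjugacy carries the minimal completely invariant closed set, the minimal periodic orbit, and the renormalization interval of $f$ to those of $T$; in particular periodicity of a renormalization (the case $D=O$ of Theorem B in \cite{D}) is preserved, and $R^if$ is conjugate to $R^iT$ for every $i$. Since a $\beta$-transformation can only be renormalized periodically (\cite{G}), every renormalization of $f$ is periodic. Finiteness I would obtain from an entropy estimate: any Lorenz map has two increasing branches, so $f^n$ has at most $2^n$ laps and $h_{\mathrm{top}}\le\log 2$. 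On the other hand the defining integers satisfy $\ell,r\ge 2$, so a single renormalization step composes at least two branches and therefore at least squares the minimal slope; iterating, the $k$-th renormalization of $T$ has minimal slope $\ge\beta^{2^k}$, and a standard lap-counting bound gives its topological entropy $\ge 2^k\log\beta$. This exceeds $\log 2$ for large $k$, contradicting that each renormalization is itself a Lorenz map. Hence $T$, and so $f$, is finitely renormalizable.

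For the converse I would induct on the renormalization depth $m$. If $m=0$ then $f$ is prime, hence strongly transitive by \cite{D}, hence conjugate to a $\beta$-transformation by Parry \cite{P3}. If $m\ge 1$, the minimal renormalization $Rf$ is periodic, so $D=O$ is the period-$\kappa$ orbit; moreover $Rf$ is $(m-1)$-times renormalizable with all renormalizations periodic, and by the inductive hypothesis $Rf$ is conjugate to a $\beta$-transformation $T_{\beta'}$. A periodic renormalization is governed by the combinatorics of $O$: there are finite words $W_0,W_1$ over $\{0,1\}$, read off from the itineraries of $e_-$ and $e_+$ and the return times $\ell,r$, such that the $f$-itinerary of any point is the image under the substitution $\Sigma:0\mapsto W_0,\ 1\mapsto W_1$ of its $Rf$-itinerary. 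Applying $\Sigma$ to the kneading invariant of $T_{\beta'}$ thus produces the kneading invariant of $f$. It then remains to check that $\Sigma$ sends a $\beta$-transformation kneading invariant to another one, i.e. that Parry admissibility is preserved by the substitution attached to a periodic orbit; granting this, the resulting sequence is realized by some $T_{\beta,\alpha}$ with $1<\beta\le 2$, and by the rigidity from the first paragraph $f$ is conjugate to that $T_{\beta,\alpha}$.

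The main obstacle is this inductive lifting: verifying that the substitution $\Sigma$ coming from a periodic renormalization maps the set of admissible $\beta$-expansion sequences into itself, and that the substituted sequence is genuinely \emph{realized} by a $\beta$-transformation rather than merely satisfying a formal inequality. This is exactly where the hypothesis that the renormalization is periodic is essential: for a non-periodic renormalization the return words would not interlock with the fixed points $e_\pm$ in the manner forced by equal slopes, and admissibility would fail. A secondary point demanding care is rigidity --- equality of kneading invariants yields only a semi-conjugacy in general (Milnor--Thurston), so one must invoke the expanding hypothesis (density of $\bigcup_{n\ge 0}f^{-n}(c)$ and the consequent absence of wandering intervals) to upgrade it to a topological conjugacy, precisely as in the identification of prime maps with the strongly transitive ones in \cite{D}.
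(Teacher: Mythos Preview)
The paper does not prove this proposition at all: it is attributed to Glendinning \cite{G} and used as a black box --- together with the Main Theorem$'$ --- to deduce the Main Theorem. So there is no proof in the paper against which to compare your attempt.

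That said, your outline is broadly in the spirit of Glendinning's original argument (kneading invariants, substitutions induced by renormalization, Parry admissibility), and the forward direction is essentially fine: conjugacy transports the renormalization structure, $\beta$-transformations renormalize only periodically, and the slope/entropy growth $\beta\mapsto\beta^{\ell}$ with $\ell\ge 2$ forces finiteness since a Lorenz map has entropy at most $\log 2$.

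The genuine gap is in the converse. You yourself flag it: the inductive step rests entirely on the claim that the substitution $\Sigma$ attached to a periodic renormalization carries the admissible kneading set of $\beta$-transformations into itself and that the substituted pair is actually \emph{realized} by some $T_{\beta,\alpha}$. You write ``granting this'' and move on, but this is precisely the nontrivial content of \cite{G}; without it you have only restated the problem. A complete proof needs either (i) a direct verification that the Parry inequalities $\sigma^n(\underline{k}_-)\preceq\underline{k}_-$ and $\underline{k}_+\preceq\sigma^n(\underline{k}_+)$ are preserved under $\Sigma$ --- which hinges on the fact that in the periodic case the words $W_0,W_1$ arise from a single periodic orbit, forcing the overlap conditions to line up --- or (ii) an appeal to the explicit characterization in \cite{G} of which kneading invariants come from $\beta$-transformations. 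Merely asserting that ``periodicity is essential here'' is not an argument. The rigidity step (upgrading semi-conjugacy to conjugacy via density of $\bigcup_n f^{-n}(c)$) is correctly identified and is standard.
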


In fact, we shall actually prove the following Main Theorem'.

\vspace{0.3cm}

 {\bf Main Theorem'.} {\it Let $f\in \mathcal{L}$, then  $f$ is finitely renormalizable
and each renormalization of $f$ is periodic.} \vspace{0.5cm}

\begin{remark}

\begin{enumerate}

\item Main Theorem' indicates that the renormalization process of $f\in \mathcal{L}$ is simple:\ all of the
renormalizations are periodic. And one can obtain all of the
renormalizations in finite steps.

\item Suppose $f \in \mathcal
{L}$ is $m$-renormalizable, then by Theorem C in \cite{D}, $f$
admits a cluster of completely invariant closed sets
$$\emptyset=E_0 \subset E_1 \subset E_2 \subset \cdot \cdot \cdot \subset E_m \subset I,$$
where $m$ is finite, and $E_{m-i}$ equals to the  $ith$ derived set
of $E_m$, $i=1, 2, \ldots, m$.

\item According to Parry \cite{P4}, when $a \in
(2^{2^{-(m+1)}},\ 2^{2^{-m}}]$, the symmetric piecewise linear
Lorenz map $f_{a,a,1/2}$ is $m$-renormalizable, so one can obtain
countable set with given finite depth in dynamical way.

\item $f \in \mathcal {L}$, $E$ be a proper complete invariant closed set of $f$, and $g=R_E$ be the renormalization
corresponds to $E$. Since $E$ is countable, the topological entropy
$h(f|_E)=0$ (cf. \cite{D,GH,LM}). Such a renormalization does not
induce phase transition under the natural potential $-t \log |Df|$
(\cite{Do}).

\end{enumerate}
\end{remark}

Let us point out the main ideas in the proof of our Main Theorem'.
Denote by $\mathcal {LR}$ the class of maps in $\mathcal {L}$ which
are renormalizable, and $\mathcal {L}_2$ be the class of maps in
$\mathcal {L}$ and satisfy the additional condition
\begin{equation}\label{k=2}
(AC)\ \ \ \ \ \ \ \ \ \ \ \ \ \ \ \ \ 1-ac=f(0)<c<f(1)=b(1-c).\ \ \
\ \ \ \ \ \ \ \ \ \ \ \ \ \ \ \
\end{equation}
According to Lemma \ref{minimal period} in Section 2, any map in
$\mathcal {L}_2$ admits minimal period $\kappa=2$.  Fix $f \in
\mathcal {L}$, we denote $\kappa$ as its the minimal period, $O$ as
the unique $\kappa$-periodic orbit and $D$ as the minimal completely
invariant closed set of $f$.

Observe that $f \in \mathcal {LR}$ implies the minimal
renormalization $Rf \in \mathcal {L}$. So, in order to show each
renormalization of $f $ is periodic, it is necessary to show the
following
\begin{equation}\label{pperiodic}
  \forall f \in \mathcal {LR}, \ \ \ \ \ Rf \ is \ periodic.
\end{equation}
According to the trichotomy of expanding Lorenz maps,
(\ref{pperiodic}) is implied by the following dichotomy
\begin{equation}\label{alternative}
{\bf Dichotomy:} \ \ \ \ \ \ \ \ \ \  If \  f \in \mathcal {L},\
then  \ \ either \ \ D=O \ \  or \ \  D=I. \ \ \ \ \ \ \ \ \ \ \ \
\end{equation}

So, our aim is to show the Dichotomy, because, as we shall see, $f$
is finitely renormalizable is a direct consequence of it. This,
together with Proposition \ref{Glendinning}, ensures the conjugacy.

The first step towards the proof of the Dichotomy is to reduce the
proof for maps in $\mathcal {L}$ to the maps in $\mathcal {L}_2$ by
trivial renormalization (see Section 2 for the details of trivial
renormalization). In what follows, we sketch the proof of Dichotomy
for $f \in \mathcal {L}_2$.

According to equations (\ref{periodic pt}) and (\ref{periodic pts}),
any renormalization corresponds two periodic points, $e_-$ and
$e_+$. An $m$-periodic point is said to be {\it nice} if $f^m$ is
continuous on the interval between $p$ and the critical point $c$.
$\{p,\ q\}$ is a {\it nice pair} if both $p$ and $q$ are nice
periodic points and $p<c<q$.
Let $\{p,\ q\}$ be a nice pair, and the period of $p$ and $q$ be
$\ell$ and $r$, respectively. Put
$$M_p=\prod_{i=0}^{\ell-1}f'(f^i(p)),\ \ \ \ \ M_q=\prod_{i=0}^{r-1}f'(f^i(q)).$$
Each factor in $M_p$ and $M_q$ is either $a$ or $b$ because $f$ is
piecewise linear. The proof of the Dichotomy for  $f \in \mathcal
{L}_2$ can be divided into two steps:

 {\it Step 1:} Show that if the nice pair $\{p,\ q\}$ corresponds to a
renormalization, then
$$(M_{p}-1)(M_q-1) \le 1. $$

{\it Step 2:} If $D \neq O$, show that for any nice pair $\{p,\
q\}$, we have
\begin{equation} \label{lowerbd} (M_p-1)(M_q-1)>1.
\end{equation}

Step 1 is fairly easy, and depends on the properties of
renormalization and $f$ is piecewise linear.

Step 2 is more involved. We decompose the proof into three cases:
both $a \ge 1$ and $ b \ge 1$, $a<1<b$ and $a>1>b$. In the first
case, all of the factors in the product of $M_p$ and $M_q$ are no
less than $1$, it is easier to get the lower bounds of $M_p$ and
$M_q$. The first case is a direct consequence of some inequalities
obtained from the action of $f$ on some intervals. The second case
and the third case are similar. In order to get lower bounds for
$M_p$ and $M_q$ when $a<1<b$, we introduce the {\it first exit
decomposition}. Although $f$ is contractive on the left side of the
critical point, it is possible to find a set $A$ ($A=[0,\ c_1]$,
$c_1$ is the preimage of $c$ on the left side of $c$) so that
$M_A(x) \ge 1$ for many initial $x$, where
$$
M_A(x)=\prod_{i=0}^{n_A(x)-1}f'(f^i(x)),
$$ and $n_A(x)$ is the
first exit time of the orbit  $O(x)$ from $A$.

Suppose the orbit $O(c_-)$ leave $A$ exact $s$ times, and the orbit
$O(c_+)$ leaves $A$ exact  $t$ times, using the first exit
decomposition, we can obtain (see Section 3 for details)
$$M_p=M_A(c_-)M_A(y_1)\cdots M_A(y_{t-1})W(y_t),
$$
$$
M_q=M_A(c_+)M_A(x_1)\cdots M_A(x_{s-1})W(x_s).$$ Depending on the
position of $f(0)=1-ac$, we have three cases. In each case, we can
obtain lower bounds of $M_p$ and $M_q$ to ensure (\ref{lowerbd}).

The remain parts of the paper is organized as follows. We describe
trivial renormalization in Section 2, so that we can reduce the
proof for maps in $\mathcal {L}$ to the maps in $\mathcal {L}_2$. We
set up the {\it expansion of nice pair} (\ref{lowerbd}) for maps in
$\mathcal {L}_2$ in Section 3, and prove  Main Theorem' in the last
section.


\vspace{0.5cm}

\section{Trivial renormalization} \ \

In the definition of renormalization of Lorenz map, we assume that
both $\ell>1$ and $r>1$. And we have a one-to-one correspondence
between such kind of renormalizations and  proper completely
invariant closed sets (Theorem A in \cite{D}).

\begin{definition}(\cite{GS})
A Lorenz map $f$ is said to be {\it trivially renormalizable} if we
have $(\ell,\ r)=(1,\ 2) $ or $(\ell,\ r)=(2,\ 1)$ in equation
(\ref{renormalization}), and such a map $g$ is called a trivial
renormalization of $f$.
\end{definition}


\begin{lemma} (\cite{D}) \label{minimal period}
Suppose $f$ is an expanding Lorenz map on $[a,\ b]$ without fixed
point. Then the minimal period of $f$ is equal to $\kappa=m+2$,
where
\begin{equation} \label{mm}
 m=\min\{i \ge 0: f^{-i}(c) \in [f(a), f(b)]\}.
\end{equation}
\end{lemma}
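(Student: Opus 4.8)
The plan is to bound the minimal period $\kappa$ both above and below by $m+2$, using a criterion for the existence of periodic points in terms of the partition of $[a,b]$ generated by the backward images of the discontinuity $c$. First I record the basic structure. Since $f$ has no fixed point, $f(x)>x$ on $[a,c)$ and $f(x)<x$ on $(c,b]$; in particular $f(c_-)=b$ and $f(c_+)=a$ (otherwise $f(a)=a$ or $f(b)=b$), and every orbit that avoids $c$ splits into alternating maximal monotone pieces --- strictly increasing left runs in $[a,c)$ and strictly decreasing right runs in $(c,b]$ --- each of finite length, since an infinite run would produce a fixed point or one of the equalities $f(c_\pm)=c$. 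If $c\in[f(a),f(b)]$ then $m=0$; here $c$ has one preimage on each branch, $f^{2}$ carries the rank-$2$ cylinder immediately to the left of the left-branch preimage of $c$ across its own closure, and the resulting fixed point of $f^{2}$ is genuinely of period $2$ since $f$ has none, so $\kappa=2=m+2$. Otherwise $c\notin[f(a),f(b)]$, and, conjugating by $x\mapsto a+b-x$ if necessary --- which interchanges the two branches, preserves ``expanding and fixed-point-free'', and leaves $m$ unchanged --- I may assume $c>f(b)$; then $f((c,b])=(a,f(b)]\subset(a,c)$, so every right run has length exactly $1$.

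Next I set up the criterion. For $n\ge 1$ let the rank-$n$ cylinders be the connected components of $[a,b]\setminus(\{c\}\cup f^{-1}(c)\cup\cdots\cup f^{-(n-1)}(c))$. On such a cylinder $J$ the iterate $f^{n}$ is continuous and, being a composition of increasing branch maps, increasing; hence $f^{n}(J)$ is an interval, and by the intermediate value theorem $f$ has a point of period $n$ in $J$ whenever $f^{n}(J)\supseteq\overline{J}$, while conversely every $n$-periodic point lies in a rank-$n$ cylinder with that property. Thus $\kappa$ is the least $n$ for which some rank-$n$ cylinder self-covers under $f^{n}$, so the task is to locate such a cylinder and to bound its rank.

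To locate it, I use that a point outside $[f(a),f(b)]$ has a unique $f$-preimage, so that the backward chain $c=z_{0},z_{1},\dots,z_{m}$ with $f(z_{i+1})=z_{i}$, $z_{i}\notin[f(a),f(b)]$ for $i<m$, and $z_{m}\in[f(a),f(b)]$, is uniquely determined; this chain is what governs $\kappa$. For the upper bound I take $J^{\ast}$ to be the rank-$(m+2)$ cylinder whose points first shadow the chain (itinerary $s_{0}s_{1}\cdots s_{m-1}$, with $s_{i}\in\{L,R\}$ the symbol of $z_{m-i}$) and then cross $c$ so that the $m$-th iterate lies in $[a,c)$ and the $(m+1)$-st in $(c,b]$; using the minimality of $m$ one checks that $J^{\ast}$ is nonempty and $f^{m+2}(J^{\ast})=(a,f(b))$, and this interval contains $\overline{J^{\ast}}$ precisely because $z_{m}\le f(b)$, i.e. because $z_{m}\in[f(a),f(b)]$, so $\kappa\le m+2$. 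For the lower bound I show that no rank-$n$ cylinder self-covers once $n\le m+1$: the endpoints of $f^{n}(J)$ lie in the forward orbits of $a$ and $b$ up to time $n$ (recall $f(c_\pm)\in\{a,b\}$), and the hypotheses $z_{0},\dots,z_{m-1}\notin[f(a),f(b)]$ --- together with $f>\mathrm{id}$ on $[a,c)$, $f<\mathrm{id}$ on $(c,b]$, and the length-$1$ property of the right runs --- force every such image interval to fall short of the cylinder it came from. Combining the two bounds yields $\kappa=m+2$.

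The main obstacle is the lower bound: one has to rule out self-covering for \emph{every} rank-$n$ cylinder with $n\le m+1$, not just the ones adjacent to the chain $z_{i}$, and hence to control $f^{n}(J)$ for an arbitrary cylinder $J$. The bookkeeping is further complicated by the possibility that the chain passes from one branch to the other (whenever some $z_{i}$ falls below $f(a)$). The most economical way to keep this under control is to argue at the level of the admissible length-$n$ itinerary words --- showing that none of them is ``periodic-admissible'' before length $m+2$ --- rather than writing out explicit inequalities cylinder by cylinder.
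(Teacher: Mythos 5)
The paper does not prove this lemma at all---it is imported wholesale from \cite{D}---so your argument has to stand on its own, and as written it has two genuine gaps. The first is in the upper bound: the identity $f^{m+2}(J^{\ast})=(a,\ f(b))$ is false precisely in the branch-crossing situation you yourself flag as delicate. Concretely, take the map on $[0,\ 1]$ with $c=0.6$, left branch $x\mapsto\tfrac{13}{12}x+0.35$ and right branch $x\mapsto 1.2(x-0.6)$. It lies in $\mathcal{L}$, has no fixed point, and $c>f(1)=0.48$; the backward chain is $z_1=\tfrac{3}{13}\approx 0.2308<f(0)=0.35$, $z_2\approx 0.7923$, $z_3\approx 0.4083\in[f(0),\ f(1)]$, so $m=3$ (and indeed $\kappa=5$, realized by an orbit with itinerary $LLRLR$). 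Here $J^{\ast}=(z_1,\ z_3)$ is the cylinder with itinerary $LRLLR$, and $f^{5}(J^{\ast})=(f^{3}(0),\ f(1))=(0.155,\ 0.48)$, not $(0,\ 0.48)$: because $z_1<f(0)$, the pullback of $(z_1,\ c)$ along the chain runs into the endpoint $f(c_+)=0$, and the left endpoint of $f^{m+2}(J^{\ast})$ is a forward iterate of $a$ rather than $a$ itself. The covering $f^{m+2}(J^{\ast})\supseteq\overline{J^{\ast}}$ does survive in this example, but proving $f^{3}(0)<z_1$ in general has to be unwound back to $z_m\ge f(a)$; so one needs \emph{both} halves of the condition $z_m\in[f(a),\ f(b)]$, not only $z_m\le f(b)$, together with a careful record of which itinerary constraint is active at each endpoint of $J^{\ast}$---exactly the bookkeeping your sketch waves away. (A smaller slip of the same kind: in the $m=0$ case the $2$-periodic point lies in the cylinder $(c_\ell,\ c)$ to the \emph{right} of the left-branch preimage $c_\ell$ of $c$, not to its left.)

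The second gap is that the lower bound, which is the real content of the lemma, is never argued: you call it ``the main obstacle'' and propose to handle it ``at the level of admissible words,'' but give no reason why no word of length at most $m+1$ is periodic-admissible. Moreover, the reduction of ``no period $\le m+1$'' to ``no rank-$n$ cylinder self-covers'' rests on the unproved converse half of your criterion (``every $n$-periodic point lies in a rank-$n$ cylinder with that property''). That converse is false for Lorenz maps in general---an attracting cycle sits in a cylinder mapped strictly inside itself---and for expanding ones it genuinely requires the density of $\bigcup_{n\ge0}f^{-n}(c)$: if $J=(u,\ v)$ contains an $n$-periodic point $p$ but $f^{n}(J)$ does not reach past $v$, then $[p,\ v)$ is $f^{n}$-invariant, never meets a preimage of $c$, and contradicts density. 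You never invoke the expanding hypothesis, so as stated your strategy does not exclude periodic points of small period. A cleaner route, consistent with the machinery this paper actually develops, is induction via Proposition \ref{trivial renormalization}: trivial renormalization reduces everything to the base case $c\in[f(a),\ f(b)]$, $\kappa=2$, provided one checks that $\kappa$ and $m$ drop by the same amount at each renormalization step.
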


\begin{proposition}\label{trivial renormalization}  Let $f$ be
an expanding Lorenz map on $[a,\ b]$ with minimal period $\kappa$.
If $c \notin (f(a),\ f(b))$, then there exists a Lorenz map $g$ with
minimal period less than $\kappa$, such that $f$ is renormalizable
if and only if $g$ is renormalizable. Moreover, if $f$ is
renormalizable, then the minimal renormalization of $f$ is periodic
if and only if the minimal renormalization of $g$ is periodic.
\end{proposition}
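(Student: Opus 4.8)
The plan is to convert $f$ into a Lorenz map $g$ on a smaller interval by performing a trivial renormalization, namely by collapsing one step of the dynamics on the side where $f$ maps ``too far''. Suppose $c \notin (f(a),\ f(b))$; by the expanding-on-average hypothesis $f(a) < f(b)$, so either $c \le f(a)$ or $c \ge f(b)$. Treat the case $c \ge f(b)$ (the other is symmetric). Then the whole image $f([a,b]) = [f(a), f(b)]$ lies in $[a, c]$, so $f$ followed by $f$ on the left branch is well defined; more to the point, the right branch's image $[b(\cdot - c)$-image$] = [0, f(b)]$ already sits to the left of $c$. I would set the new critical point to be the unique preimage $c' = f^{-1}(c)$ lying in the left branch (it exists and is unique because the left branch is an increasing homeomorphism onto an interval containing $c$ when $c \le f(a)$; in the case $c \ge f(b)$ one instead shifts so that one extra iterate is absorbed on the right). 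Concretely, following the trivial renormalization $(\ell, r) = (2,1)$ or $(1,2)$ of the quoted definition, define
\begin{equation}
g(x) = \left\{ \begin{array}{ll} f^{2}(x) & x \in [a,\ c'), \\ f(x) & x \in (c',\ b], \end{array} \right.
\end{equation}
on the appropriate renormalization interval $[u, v] \ni c'$, and check directly from \eqref{LinearLorenzMap}-style formulas that $g$ is again a Lorenz map: it is continuous and increasing on each side of $c'$, and $g$ sends the one-sided limits at $c'$ to the endpoints of its domain. This is the content of the trivial renormalization construction of \cite{GS}; the only thing to verify is that the hypothesis $c \notin (f(a), f(b))$ is exactly what makes the branch iterate $f^2$ (resp. $f$) well-defined and monotone on the relevant half-interval, with no discontinuity of $f$ falling in its interior.

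Next I would check that $g$ has minimal period strictly less than $\kappa$. By Lemma \ref{minimal period}, $\kappa = m + 2$ with $m = \min\{i \ge 0 : f^{-i}(c) \in [f(a), f(b)]\}$; the trivial renormalization absorbs one iterate, so the corresponding quantity $m'$ for $g$ satisfies $m' = m - 1$ (when $m \ge 1$) or the map $g$ has a fixed point / period $2$ situation handled separately (when $m = 0$, i.e. $c$ already essentially on the boundary). In all cases $\kappa' = \kappa - 1 < \kappa$, which also guarantees the induction in the Main Theorem' eventually terminates.

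For the equivalence of renormalizability, I would invoke the one-to-one correspondence (Theorem A in \cite{D}) between nontrivial renormalizations and proper completely invariant closed sets, and observe that the operations $f \leftrightarrow g$ induce a bijection on such sets: a proper completely invariant closed set $E$ for $f$ pulls back/pushes forward to one for $g$ under the semiconjugacy relating the two (the orbit collapse changes only the trivial $(1,2)/(2,1)$ layer, not the deeper combinatorics). Hence $f$ is (nontrivially) renormalizable iff $g$ is. Finally, for the ``moreover'' clause about periodicity: the minimal completely invariant closed set $D$ of $f$ corresponds under this bijection to the minimal one $D_g$ of $g$, and $D = O$ (the minimal periodic orbit) for $f$ precisely when $D_g = O_g$ for $g$, since the periodic orbits of $f$ and $g$ are in evident correspondence away from the collapsed layer and the minimal period drops by exactly one. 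By the Definition of periodic renormalization this says the minimal renormalization of $f$ is periodic iff that of $g$ is.

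The main obstacle I anticipate is the bookkeeping in the two boundary-ish cases $m = 0$ and the precise identification of the renormalization interval $[u,v]$ and new critical point $c'$ so that $g$ genuinely satisfies the Lorenz-map axioms (in particular $g(\{c'_-, c'_+\}) = \partial[u,v]$), together with checking that the correspondence of completely invariant closed sets really is a bijection and really sends $O$ to $O_g$ — i.e. that no completely invariant closed set is created or destroyed by the trivial collapse. Everything else is a routine unwinding of the definitions plus Lemma \ref{minimal period}.
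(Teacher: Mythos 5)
Your overall strategy is the paper's: pass to a trivial renormalization $g$ and transfer the statements through a correspondence of invariant sets. But the construction of $g$ as you have written it does not work. The paper's trivial renormalization keeps the critical point at $c$ and shrinks the domain: for $c\le f(a)$ it takes $g=f^2$ on $[a,\,c)$ and $g=f$ on $(c,\,f(b)]$, acting on $[a,\,f(b)]$; for $c\ge f(b)$ it takes $g=f$ on $[f(a),\,c)$ and $g=f^2$ on $(c,\,b]$, acting on $[f(a),\,b]$. You instead move the critical point to a preimage $c'=f^{-1}(c)$, and this fails in two ways. First, $c'$ lies in the branch opposite to the one you claim: when $c\le f(a)$ the left branch maps $[a,c)$ onto $[f(a),b)$, which does not contain $c$ in its interior, so the preimage of $c$ sits in the right branch; it is in the case $c\ge f(b)$ that $c'$ lies in the left branch. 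Second, and more seriously, with $g=f^2$ to the left of $c'$ and $g=f$ to the right of $c'$ one gets $g(c'_+)=f(c')=c$, which would have to be the left endpoint $u$ of the renormalization interval; since $c'<c=u$, the point $c'$ is not even in $[u,v]$, so the Lorenz boundary condition $g(\{c'_-,c'_+\})\to\partial[u,v]$ cannot be satisfied. You have also attached the extra iterate to the wrong side: when $c\ge f(b)$ it is the image of the \emph{right} branch that lands entirely in $[a,c]$, so it is $(\ell,r)=(1,2)$ that is well defined there, not $(2,1)$.

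Beyond the construction, the load-bearing step of the paper's proof is the orbit identity $orb(x,g)=orb(x,f)\cap J$ for the restricted interval $J$, which immediately yields $O_g=O_f\cap J$ and $D(g)=D(f)\cap J$ and hence, via Theorem B of \cite{D}, both the equivalence of renormalizability and the equivalence of periodicity of the minimal renormalizations ($D(f)=O_f$ iff $D(g)=O_g$, and $D(f)=I$ iff $D(g)=J$). In your write-up this appears only as something ``to check \dots that no completely invariant closed set is created or destroyed''; that verification is the entire content of the proposition, not a loose end, and it is exactly what the correct choice of $g$ (same critical point, restricted domain) makes transparent. Finally, your claim that the minimal period drops by exactly one is stronger than needed and is not justified (the minimal periodic orbit of $f$ may meet the discarded subinterval in more than one point); the paper asserts and uses only the strict inequality $\kappa(g)<\kappa(f)$.
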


\begin{proof}
Since $c \notin (f(a),\ f(b))$, we have two cases: $c \le f(a)$ or
$c \ge f(b)$.

For the case $c \le f(a)$,  the following map
\begin{eqnarray*} g(x)=\left \{ \begin{array}{ll}
f^{2}(x) & x \in [a,\   c) \\
f(x) & x \in (c, \ f(b) ].
\end{array}
\right.
\end{eqnarray*}
is an expanding Lorenz map with minimal period less than $\kappa$,
and
\begin{equation}\label{trivial0}
orb(x, g)=orb(x, f)\cap [a,\ f(b)].
\end{equation}

If $c \ge f(b)$, the following
\begin{eqnarray*} g(x)=\left \{ \begin{array}{ll}
f(x) & x \in [f(a),\   c) \\
f^2(x) & x \in (c, \ b ].
\end{array}
\right.
\end{eqnarray*}
is also an expanding Lorenz map with minimal period less than
$\kappa$, and
\begin{equation}  \label{trivial}
orb(x, g)=orb(x, f)\cap [f(a),\ b].\end{equation}

See Figure 2 (Heavy Lines) for the intuitive pictures of $g$.

\begin{figure}
\centering
\includegraphics[width=\textwidth]{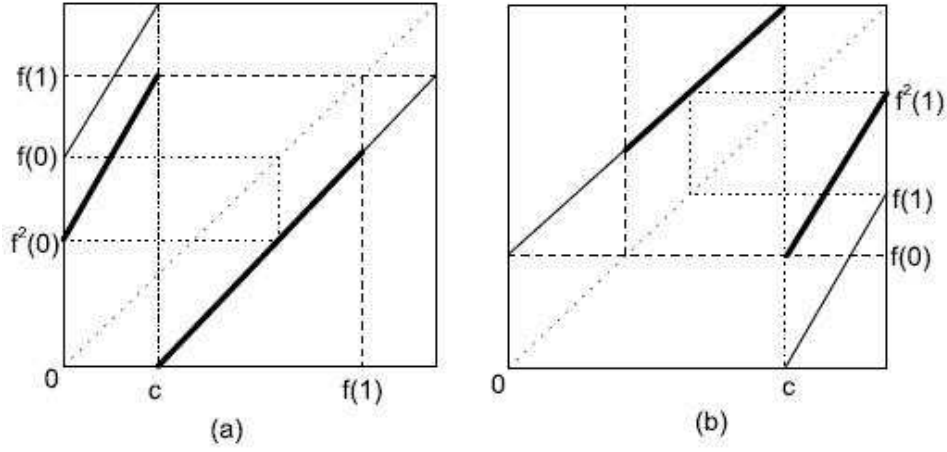}
\caption{Trivial renormalization of a map on $[0,\ 1]$, the pictures
of $g$: (a) $c \le f(0)$,\ \ (b) $f(1)\le c$.}
\end{figure}

Denote $O_f$ and  $O_g$ as the periodic orbit with minimal period of
$f$ and $g$,  and $D(f)$ and $D(g)$ as the minimal completely
invariant closed set of $f$ and $g$, respectively.

If $c \le f(a)$, by (\ref{trivial0}), we get  $O_g=O_f \cap [a,\
f(b)]$, and $D(g)=D(f) \cap [a,\ f(b)]$. It follows that $D(f)=I$ is
if and only if $D(g)=[a,\ f(b)]$, and $D(f)=O_f$ if any only if
$D(g)=O_g$.

If $c \ge f(b)$, by (\ref{trivial}), we obtain $O_g=O_f \cap [f(a),\
b]$, and $D(g)=D(f) \cap [f(a),\ b]$. It follows that $D(f)=I$ is if
and only if $D(g)=[f(a),\ b]$, and $D(f)=O_f$ if any only if
$D(g)=O_g$.

In both cases, according to Theorem B in \cite{D}, we know that $f$
is renormalizable if and only if $g$ is renormalizable. Moreover, if
$f$ is renormalizable, the minimal renormalization of $f$ is
periodic if and only if the minimal renormalization of $g$ is
periodic.
\end{proof}


It is easy to see that a Lorenz map with $c \in (f(a),\ f(b))$ can
not be trivially renormalizable, so the statement in Proposition
\ref{trivial renormalization} is just the the fact that an expanding
Lorenz map $f$ is {\it trivially renormalizable} if and only if $c
\notin (f(a),\ f(b))$.



Applying trivial renormalization (see Proposition \ref{trivial
renormalization}, (\ref{trivial0}) and (\ref{trivial}))
consecutively if possible, we get the following Corollary.

\begin{corollary} \label{cor}  Let $f$ be
an expanding Lorenz map with minimal period $\kappa$. If
$\kappa<\infty$, then $f$ can be trivially renormalized finite times
to be an expanding Lorenz map $g$ with $\kappa(g) \le 2$.

\end{corollary}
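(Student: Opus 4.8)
The plan is a straightforward induction on the minimal period $\kappa$, using Proposition \ref{trivial renormalization} as the inductive step and Lemma \ref{minimal period} to verify its hypothesis. If $\kappa \le 2$ there is nothing to prove: take $g=f$. So assume $\kappa \ge 3$. Since $\kappa \ge 2$, the map $f$ has no fixed point, hence Lemma \ref{minimal period} applies and gives $\kappa = m+2$, where $m=\min\{i \ge 0 : f^{-i}(c)\in [f(a),f(b)]\}$. From $\kappa \ge 3$ we get $m \ge 1$, so $i=0$ fails the defining condition; that is, $c = f^{-0}(c)\notin [f(a),f(b)]$, and in particular $c \notin (f(a),f(b))$.

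By the remark following Proposition \ref{trivial renormalization}, this means $f$ is trivially renormalizable, and by the proposition itself its trivial renormalization --- the map $g_1$ written out explicitly there according to whether $c \le f(a)$ or $c \ge f(b)$ --- is again an expanding Lorenz map, now with minimal period $\kappa(g_1) < \kappa$. If $\kappa(g_1)\le 2$ we stop; otherwise we repeat the argument verbatim with $g_1$ in place of $f$, obtaining a trivial renormalization $g_2$ of $g_1$ with $\kappa(g_2) < \kappa(g_1)$, and so on, each $g_{n+1}$ being a trivial renormalization of $g_n$ that remains in the class of expanding Lorenz maps.

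Because $(\kappa(g_n))$ is a strictly decreasing sequence of positive integers, the procedure terminates after finitely many steps, necessarily at a map $g$ with $\kappa(g)\le 2$; by construction $g$ is reached from $f$ through finitely many successive trivial renormalizations, which is the claim. There is no genuine obstacle here: the entire content sits in Proposition \ref{trivial renormalization}, which simultaneously guarantees that each $g_n$ stays in the class of expanding Lorenz maps (so that Lemma \ref{minimal period} and the proposition can be reapplied) and that each step strictly decreases $\kappa$, so termination follows from the well-ordering of $\N$.
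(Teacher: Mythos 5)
Your argument is correct and is essentially the paper's own: the paper simply states that the corollary follows by applying Proposition \ref{trivial renormalization} consecutively, and you have filled in the same induction, using Lemma \ref{minimal period} to check that $\kappa\ge 3$ forces $c\notin[f(a),f(b)]$ so the proposition applies, with termination by strict decrease of $\kappa$.
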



\vspace{0.5cm}

\section{Expansion of nice pair} \ \

Suppose $p$ is a periodic point with period $m$. $p$ is called a
{\it nice periodic point} if $f^m$ is continuous on the interval
between $p$ and the critical point $c$. $\{p,\ q\}$ is called a {\it
nice pair} if $p<c<q$, and both $p$ and $q$ are nice periodic
points. If $E$ is a proper completely invariant closed set of $f$,
$e_-$ and $e_+$ are defined by (\ref{periodic pt}), then $\{e_-,\
e_+\}$ is a nice pair. A nice pair $\{p,\ q\}$ corresponds to a
renormalization if and only if $[f^r(c_+),\ f^{\ell}(c_-)] \subseteq
[p,\ q]$, where $\ell$ and $r$ are the periods of $p$ and $q$,
respectively.

Assume that $f \in \mathcal {L}_2$, by Lemma \ref{minimal period},
$f$ admits a two periodic orbit $O=\{P_L,\ P_R\}$, and
$0<P_L<c<P_R<1$. Let $\{p,\ q\}$ be a nice pair of $f$, $\ell$ and
$r$ be the period of $p$ and $q$, respectively. So $f^{\ell}$ is
linear on $[p,\ c_-]$, and $f^r$ is linear on $[c_+,\ q]$. Put
\begin{equation}  \label{mp-mq}
\left\{
\begin{aligned}
         M_{p}:&=(f^{\ell})'(p)=(f^{\ell})'(c_-)=\prod_{i=0}^{\ell-1}f'(f^i(c_-)), \\
                  M_{q}:&=(f^{r})'(q)=(f^{\ell})'(c_+)=\prod_{i=0}^{r-1}f'(f^i(c_+)).
                          \end{aligned} \right.
                          \end{equation}

The main purpose of this section is to prove the following {\it
expansion of nice pair} for maps in $\mathcal {L}_2$, which is
essential for us to obtain the Dichotomy (\ref{alternative}).

\begin{theorem} \label{lower bound}
Suppose $f \in \mathcal {L}_2$, $\{p,\ q\}$ is a nice pair of $f$,
and $M_{p}$ and $M_q$ are defined as above. If\ \  $[f(0),\ f(1)]
\nsubseteq [P_L,\ P_R]$, then
\begin{equation}\label{expansion}
(M_{p}-1)(M_q-1)>1.
\end{equation}

\end{theorem}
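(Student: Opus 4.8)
The plan is to exploit the special structure of $f\in\mathcal{L}_2$: the discontinuity $c$ lies strictly between $f(0)=1-ac$ and $f(1)=b(1-c)$, and the minimal period is $\kappa=2$ with two-periodic orbit $\{P_L,P_R\}$. The hypothesis $[f(0),f(1)]\nsubseteq[P_L,P_R]$ splits into two sub-cases, $f(0)<P_L$ or $f(1)>P_R$, which are symmetric under the obvious reflection; so I would fix attention on, say, $f(0)<P_L$. Given a nice pair $\{p,q\}$ with periods $\ell,r$, the goal is a lower bound on the product $(M_p-1)(M_q-1)$, where $M_p,M_q$ are the (constant) slopes of $f^\ell$ on $[p,c_-]$ and of $f^r$ on $[c_+,q]$ coming from formula (\ref{mp-mq}).

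First I would set up the length/expansion bookkeeping: since $f^\ell$ maps $[p,c]$ linearly onto an interval with right endpoint $f^\ell(c_-)$ and left endpoint $f^\ell(p)=p$ (because $p$ is a nice periodic point, using (\ref{periodic pts})), we get $M_p=\dfrac{f^\ell(c_-)-p}{c-p}$, and similarly $M_q=\dfrac{q-f^r(c_+)}{q-c}$. The next step is to locate $f^\ell(c_-)$ and $f^r(c_+)$: the orbit of $c_-$ must first land in $[f(0),f(1)]$ (it starts at $f(c_-)=f(0)$ actually, so this is immediate) and then be tracked; more usefully, I would compare $f^\ell(c_-)$ with $1$ and $f^r(c_+)$ with $0$, since $f^\ell(c_-)\le 1$ and $f^r(c_+)\ge 0$ give crude but often sufficient bounds $M_p\le\frac{1-p}{c-p}$, $M_q\le\frac{q}{q-c}$ — but these are upper bounds, whereas I want lower bounds. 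So the real work is to turn the condition $f(0)<P_L$ into a statement that the orbit pieces are \emph{long} in the expanding region. Here I expect to use the first exit decomposition advertised in the introduction: writing $A=[0,c_1]$ with $f(c_1)=c$, one shows $M_A(x)\ge 1$ along orbit segments that exit $A$, and then $M_p$ factors as $M_A(c_-)M_A(y_1)\cdots M_A(y_{t-1})W(y_t)$ and $M_q$ as $M_A(c_+)M_A(x_1)\cdots M_A(x_{s-1})W(x_s)$, where $s,t$ count the exits of $O(c_+),O(c_-)$ from $A$ respectively.

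The heart of the argument — and the step I expect to be the main obstacle — is extracting quantitative lower bounds on the factors $M_A(\cdot)$ and on the ``tail'' factors $W(x_s),W(y_t)$ from the single scalar inequality $f(0)<P_L$ (equivalently $1-ac<P_L$, where $P_L$ is explicitly $\frac{(b-1)(1-ac)}{\,b - ab\,}$-type expression solved from $f^2(P_L)=P_L$). Concretely: $f(0)<P_L$ forces $c_-$'s orbit to enter the left branch deeply, so $M_A(c_-)$ inherits a definite lower bound in terms of $a,b,c$; and it simultaneously constrains where $f(1)=b(1-c)$ sits, controlling the $q$-side. I would split once more according to the position of $f(0)$ relative to $c_1$ (i.e.\ whether $f(0)$ is already outside $A$), matching the ``three cases depending on the position of $1-ac$'' mentioned in the sketch. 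In each case the bounds on the partial products combine multiplicatively, and one checks by a direct (if slightly tedious) computation that $(M_p-1)(M_q-1)>1$; the inequality is strict because the hypothesis is a strict inequality and because at least one genuine expanding step occurs on each side. The symmetric case $f(1)>P_R$ is handled by the reflection $x\mapsto 1-x$, $a\leftrightarrow b$, $c\leftrightarrow 1-c$, which exchanges the roles of $p$ and $q$ and leaves the product $(M_p-1)(M_q-1)$ invariant, so no new estimate is needed. I would present the $a\ge 1,b\ge 1$ situation first as a warm-up (there every factor is $\ge 1$, so the partial-product bounds are transparent), then do $a<1<b$ via the first exit decomposition, and finally note $a>1>b$ is the reflection of the previous one.
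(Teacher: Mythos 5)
Your overall architecture matches the paper's: a three-way split on the slopes ($a,b\ge 1$; $a<1<b$; $a>1>b$ by reflection), and in the contracting cases a first exit decomposition of $M_p$ and $M_q$ with respect to $A=[0,c_*]$, followed by a further split on the position of $f(0)$. However, as written the proposal has two genuine gaps. First, and most seriously, the assertion that ``one shows $M_A(x)\ge 1$ along orbit segments that exit $A$'' is simply false in the hardest subcase. When $f(0)\le c_*$ and $ba^i<1$ (where $i=\min\{k:f^k(0)>c\}$), an exit point $x_j\in(c_*,c_1]$ has $M_A(x_j)=ba^i<1$. The paper's proof cannot and does not proceed factor-by-factor there: it needs Lemma \ref{control} to locate $f^{i-1}(0)$ in $(c_1,c)$, Lemma \ref{M(x)-1} to certify the factors with $x_j>c_1$, and then a pairing argument matching each bad exit $x_{j_1}$ with an earlier good exit $x_{k_1}$ so that $M(x_{k_1})M(x_{j_1})\ge(ba^2)(ba^{i-1})(ba^{i-1})\ge 1$ via Lemma \ref{baj}. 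Your proposal defers exactly this step to ``a direct (if slightly tedious) computation,'' but this is the heart of the theorem, not a computation one can take on faith; all of the quantitative content lives in Lemmas \ref{ineq-1}--\ref{M(x)-1} (the length comparisons $|(c_{n-1},c_n)|\le|(c_n,c)|$, the inequalities $a^{k+1}b^k>1$ and $aba^{k+1}b^k>1+a^{k+1}b^k$, etc.), none of which you state or derive.

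Second, your two uses of the reflection $x\mapsto 1-x$ are inconsistent with each other. You first normalize to $f(0)<P_L$, and later you also normalize the slopes to $a<1<b$; but the reflection that achieves the first swaps $a\leftrightarrow b$ and so destroys the second. Concretely, the configuration $a<1<b$ with $P_L\le f(0)<c$ and $f(1)>P_R$ (the paper's Subcase (i) of Case B, where the hypothesis $[f(0),f(1)]\nsubseteq[P_L,P_R]$ is witnessed only on the right) is not covered by your scheme: reflecting it to force $f(0)<P_L$ lands you in $a>1>b$, which you then propose to reflect back. You may use the symmetry to dispose of the slope case $a>1>b$, or to normalize the side on which the hypothesis fails, but not both; within the case $a<1<b$ you must still handle all three positions of $f(0)$, including $f(0)\ge P_L$ where the lower bound on $M_p$ has to be extracted from the position of $f(1)$ relative to the sequence $c_k'$ via Lemma \ref{inequality}.
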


\begin{remark}
By (\ref{periodic renormalization}) and the trichotomy claimed by
Theorem B in \cite{D} , $[f(0),\ f(1)] \nsubseteq [P_L,\ P_R]$ is
equivalent to $D \neq O$.
\end{remark}
\vspace{0.3cm}

The proof of Theorem \ref{lower bound} is technical. Let $f \in
\mathcal {L}_2$ such that $D \neq O$, we divide the proof into three
cases: both $a \ge 1$ and $ b \ge 1$, $a<1<b$ and $a>1>b$. In the
first case, all of the factors in the product of $M_p$ and $M_q$ are
no less than $1$, it is easier to get the lower bounds of $M_p$ and
$M_q$. In fact, the expansion of a nice pair (\ref{expansion}) can
be achieved by Lemma \ref{inequality}, which is a direct consequence
of some inequalities obtained from the action of $f$ on some
intervals. The second case and the third case are similar. In order
to get a lower bound for $M_p$ and $M_q$ when $a<1<b$, we introduce
the {\it first exit decomposition}. Although $f$ is contractive on
the left side of the critical point, we try to decompose $M_p$ and
$M_q$ into parts so that each part is no less than $1$. Depending on
the position of $f(0)=1-ac$, we have three cases. In each case, we
can obtain lower bound of $M_p$ and $M_q$ to ensure
(\ref{expansion}). In the remain parts of this section, we introduce
the {\it first exit decomposition} firstly, then we prove some
technical Lemmas based on the detailed dynamics of $f$, and prove
Theorem \ref{lower bound} finally.

\subsection{First exit decomposition}

Let $A$ be a given set, $O(x)=\{f^j(x); j\ge 0\}$ be the orbit with
initial $x$. If $O(x)$ visits $A$, denote
$$
n_A(x)=\min\{k: f^{k-1}(x) \in A,\ f^k(x) \notin A\}
$$
as the first exit time of $O(x)$ from $A$, and the $s$th $(s \ge 1)$
exit time $n_s(x)$ from $A$ are defined inductively by
$$
n_1(x):=n_A(x),\ \ n_s(x):=\min\{k>n_{s-1}: f^{k-1}(x) \in A,\
f^k(x) \notin A\}.
$$
If $O(x)$ does not visit $A$, $n(x)=\infty$.

Denote $x_s:=f^{n_s}(x), \ \ s=1, 2,\ldots$. Put
$$
M_A(x)=\prod_{j=0}^{n_A(x)-1}f'(f^j(x)).
$$

Using above notations, the following {\it first exit decomposition}
is trivial.

\begin{lemma} \label{decomposition}$x \in I$, and $n_{s}(x) \le n<n_{s+1}(x)$,
\begin{equation}\label{deco}
(f^n)'(x)=\prod_{j=0}^{n-1}f'(f^j(x))=M_A(x)M_A(x_1)\cdots
M_A(x_{s-1}) W(x_s),
\end{equation}
where
$$
W(x_s)=f'(x_s)f'(f(x_s))\cdots f'(f^{n-1}(x)),
$$
and $W(x)=1$ if and only if $x_s=f^{n}(x)$.
\end{lemma}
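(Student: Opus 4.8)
The plan is to read the identity straight off the chain rule, grouping the $n$ factors of $(f^n)'(x)=\prod_{j=0}^{n-1}f'(f^j(x))$ according to the successive exit times; no analysis is involved, and the entire content is the bookkeeping of indices. The one fact I would record first is the cocycle relation
\begin{equation}
n_{s+1}(x)=n_s(x)+n_A(x_s),\qquad x_s=f^{n_s}(x),
\end{equation}
which follows by applying the defining formula $n_A(y)=\min\{k:f^{k-1}(y)\in A,\ f^k(y)\notin A\}$ to $y=x_s$: substituting $f^{k-1}(x_s)=f^{n_s+k-1}(x)$ and setting $m=n_s+k$ turns the condition into ``$m$ is the least integer exceeding $n_s$ with $f^{m-1}(x)\in A$ and $f^m(x)\notin A$,'' which is exactly the definition of $n_{s+1}(x)$. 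Setting $n_0(x)=0$ and $x_0=x$, the chain of exit times $0=n_0<n_1<\cdots<n_s\le n<n_{s+1}$ partitions the index range $\{0,1,\ldots,n-1\}$ into the complete blocks $[n_t,n_{t+1})$ for $0\le t\le s-1$ together with a terminal block $[n_s,n)$.

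Next I would evaluate the product over each block. Using $f^{n_t+i}(x)=f^i(x_t)$ and reindexing $j=n_t+i$, a complete block gives
\begin{equation}
\prod_{j=n_t}^{n_{t+1}-1}f'(f^j(x))=\prod_{i=0}^{n_A(x_t)-1}f'(f^i(x_t))=M_A(x_t),
\end{equation}
since the length $n_{t+1}-n_t$ equals $n_A(x_t)$ by the cocycle relation; as $x_0=x$ these blocks contribute $M_A(x)M_A(x_1)\cdots M_A(x_{s-1})$. The same reindexing $j=n_s+i$ on the terminal block produces $\prod_{i=0}^{n-n_s-1}f'(f^i(x_s))$, whose first factor is $f'(x_s)$ and whose last factor is $f'(f^{n-n_s-1}(x_s))=f'(f^{n-1}(x))$, i.e. exactly $W(x_s)$. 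Concatenating the blocks yields $(f^n)'(x)=M_A(x)M_A(x_1)\cdots M_A(x_{s-1})W(x_s)$.

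Finally I would dispose of the degenerate case: the terminal block $[n_s,n)$ is empty precisely when $n=n_s$, that is when $x_s=f^{n_s}(x)=f^n(x)$, and in that event $W(x_s)$ is the empty product and so equals $1$, which is the asserted equivalence. I do not expect any genuine obstacle here, since the statement is essentially a tautology; the only points demanding care are the index arithmetic in the cocycle relation and the check that the terminal factor $f'(f^{n-n_s-1}(x_s))$ coincides with $f'(f^{n-1}(x))$, both handled by the substitution $m=n_s+k$ together with the semigroup identity $f^{n_t+i}(x)=f^i(x_t)$.
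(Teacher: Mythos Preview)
Your proof is correct and is precisely the natural unpacking of the statement. The paper itself offers no proof at all: it introduces the exit-time notation and then simply declares ``the following first exit decomposition is trivial'' before stating the lemma. Your cocycle relation $n_{s+1}(x)=n_s(x)+n_A(x_s)$ and the resulting block partition of $\{0,\ldots,n-1\}$ are exactly the bookkeeping that makes the triviality explicit, so there is nothing to compare---you have supplied the details the authors omitted.

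One small remark: for the final clause you establish only that $x_s=f^n(x)$ implies $W(x_s)=1$ (empty product). The converse as literally stated would require that no nonempty product of $a$'s and $b$'s equals $1$, which can fail for particular parameter values; but the paper is plainly using this clause as a convention for the empty-product case rather than as a genuine biconditional, and your reading matches that intent.
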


\vspace{0.3cm}
\subsection{Technical Lemmas}

Suppose $f:=f_{a,b,c} \in \mathcal {L}_2$.
 Denote the $2$-periodic points are $P_L$ and $P_R$, $0<P_L<c<P_R<1$,
 and $c_*$ and $c^*$ are the preimages of $c$, $0<c_*<P_L<c<P_R<c^*<1$. By direct
 calculations, we get
 \begin{equation} \label{eq:1}
  \begin{aligned}
        P_L=\frac{b(c-(1-ac))}{ab-1},\ \ \ \ \ P_R=\frac{abc-(1-ac)}{ab-1}\\
        c_*=\frac{c-(1-ac)}{a},\ \ \ \ \ \ \ \ \ \ \ \ \ \  \ \ \ \ \ c^*=\frac{c+bc}{b}.
                           \end{aligned}
                           \end{equation}
Observe that $f^2$ is linear (with slope $ab=f^2(P_L)>1$) on $[c_*,\
P_L]$, and $f^2(P_L)=P_L$. Track the preimages of $c_*$ on $[c_*,\
P_L]$, one can get an increasing sequence $\{c_n\} \subset [c_*,
P_L]$,
\begin{equation} \label{c_n}
c_0:=c_*,\ \ f^2(c_1)=c_0,\   \cdots,\ f^2(c_n)=c_{n-1},\ \cdots
\end{equation}
and $c_n \uparrow P_L$. $(c_*, P_L)=\bigcup_{k \ge 1}(c_{k-1},\
c_k]$. Similarly, there exists a decreasing sequence $\{c_n'\}$
approaches to $P_R$ so that
\begin{equation} \label{c_n'}
c_0':=c^*,\ \ f^2(c_1')=c_0',\ \cdots,\ f^2(c_n')=c_{n-1}',\ \cdots.
\end{equation}

\begin{lemma} \label{ineq-1} Let $\{c_n\}$ and $\{c_n'\}$ are defined as (\ref{c_n}) and (\ref{c_n'}), we have
\begin{equation} \label{length-11}
|(c_{n-1},\ c_n)| \le |(c_n,\ c)|,
\end{equation}
\begin{equation} \label{length-12}
|(c'_{n},\ c_{n-1})| \le |(c,\ c'_n)|.
\end{equation}
\end{lemma}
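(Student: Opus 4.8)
The plan is to reduce everything to the affine dynamics of $f^2$ near the $2$-cycle. Since $f$ maps $[c_*,P_L]$ into $[c,P_R]$ by the left branch and then back into $[0,P_L]$ by the right branch, $f^2$ is affine on $[c_*,P_L]$ with slope $ab$ and fixed point $P_L$, so $f^2(x)=P_L-ab(P_L-x)$ there; symmetrically $f^2(x)=P_R+ab(x-P_R)$ on $[P_R,c^*]$. Because $f(c_*)=c$ and the next iterate uses the $c_+$ branch, $f^2(x)\to0$ as $x\downarrow c_*$; substituting $x=c_*$ in the affine formula gives the normalization $P_L-c_*=P_L/(ab)$, and likewise $c^*-P_R=(1-P_R)/(ab)$. (In particular $ab>1$, since $0<c_*<P_L$.)

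From $f^2(c_n)=c_{n-1}$ and the affine form, $P_L-c_n=\tfrac1{ab}(P_L-c_{n-1})$, hence $P_L-c_n=(P_L-c_*)(ab)^{-n}$ and, by subtraction, the gap identity $c_n-c_{n-1}=(ab-1)(P_L-c_n)$. Since $c_n<P_L<c$, inequality (\ref{length-11}) reads $c_n-c_{n-1}\le(c-P_L)+(P_L-c_n)$, which by the gap identity is equivalent to
$$(ab-2)(P_L-c_n)\le c-P_L .$$
If $ab\le2$ this is immediate. If $ab>2$, then since the $c_n$ increase and $P_L-c_1=(P_L-c_*)/(ab)=P_L/(ab)^2$, it is enough to prove it for $n=1$; there, using $P_L(ab-1)=ab\,c_*$ and the identity $c-c_*=(1-c)/a$, it collapses to the clean scalar inequality
$$2c_*\le b(1-c)\qquad\Longleftrightarrow\qquad 2\bigl((1+a)c-1\bigr)\le ab(1-c) .$$
Thus the whole lemma is reduced to this one inequality, which I would attack directly from the defining constraints of $\mathcal L_2$ ($1-ac<c<b(1-c)$, with $ac\le1$, $b(1-c)\le1$), and, if that does not suffice on its own, together with the hypothesis $D\neq O$ of Theorem~\ref{lower bound}, in whose proof this lemma is applied.

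For (\ref{length-12}) there is nothing new to do: the involution $x\mapsto1-x$ conjugates $f_{a,b,c}$ to $f_{b,a,1-c}\in\mathcal L_2$, sending the right-side data $(P_R,c^*,\{c_n'\})$ of $f$ to the left-side data $(1-P_R,\,1-c^*,\,\{1-c_n'\})$ of $f_{b,a,1-c}$, so (\ref{length-12}) for $f$ is precisely (\ref{length-11}) for the reflected map.

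The step I expect to be the real obstacle is the scalar inequality $2c_*\le b(1-c)$ when $ab>2$: here $c_*$, $P_L$ and $c$ can be squeezed together as $c$ approaches $b/(1+b)$, so the margin is slim and one must use the $\mathcal L_2$ constraints essentially without slack — it is quite possible the $n=1$ instance requires the extra input $D\neq O$ or a short separate argument, whereas $n\ge2$ is comfortable because $P_L-c_n$ has already contracted by a factor $(ab)^{-2}$. Everything else — the affine description of $f^2$, the geometric formula for $P_L-c_n$, the gap identity, and the reflection handling (\ref{length-12}) — is routine once this reduction is set up.
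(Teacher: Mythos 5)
Your reduction is correct and, after clearing denominators, is exactly the paper's: the affine form of $f^2$ on $[c_*,\ P_L]$, the geometric decay $P_L-c_n=(P_L-c_*)(ab)^{-n}$ and the gap identity turn (\ref{length-11}) into $(ab-2)(P_L-c_n)\le c-P_L$, which is trivial for $ab\le 2$ and for $ab>2$ reduces to the $n=1$ instance; the paper writes that instance as $\frac{2}{ab}+\frac{a(b(1-c)-c)}{c-(1-ac)}\ge 1$ (its inequality (\ref{length-2})), which is the same scalar inequality as your $2c_*\le b(1-c)$. Your reflection $x\mapsto 1-x$ conjugating $f_{a,b,c}$ to $f_{b,a,1-c}$ correctly disposes of (\ref{length-12}), matching the paper's ``similar calculations.'' The gap is that you stop precisely where the content of the lemma begins: you never prove $2c_*\le b(1-c)$ when $ab>2$, you only announce that you would attack it from the $\mathcal{L}_2$ constraints. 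The paper's proof of this step fixes $a$, shows $F'(b)>0$ for $ab>2$, and evaluates $F$ at $b=2/a$.

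Moreover, your own suspicion that the scalar inequality does not follow from membership in $\mathcal{L}_2$ alone is justified, so this is not a deferrable routine verification. Take $(a,b,c)=(1,\ 19/2,\ 9/10)$: then $f(0)=0.1<c=0.9<f(1)=0.95$, so $f\in\mathcal{L}_2$ with $ab=9.5>2$, yet $c_*=0.8$ gives $2c_*=1.6>0.95=b(1-c)$; correspondingly $P_L=7.6/8.5$, $c_1=8.4/9.5$, and $c_1-c_0\approx 0.084>0.016\approx c-c_1$, so (\ref{length-11}) itself fails at $n=1$. Your fallback hypothesis $D\neq O$ does not rescue it: here $f(0)<P_L$, so $[f(0),\ f(1)]\nsubseteq[P_L,\ P_R]$ and $D\neq O$ anyway. (The paper's own argument founders on the same point: it asserts $F(2/a)=1$, but in fact $F(2/a)=1+\frac{2(1-c)-ac}{c-(1-ac)}$, which is strictly less than $1$ whenever $ac>2(1-c)$; a correct proof must invoke some additional positional restriction on $f(0)$ or $f(1)$ of the kind present where the lemma is actually applied.) So: the reduction is sound and equivalent to the paper's, but the decisive step is missing, and as stated it cannot be filled.
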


\begin{proof}At first, we prove (\ref{length-11}). Using
(\ref{eq:1}),
$$
|(c_*,\ P_L)|=\frac{c-(1-ac)}{a(ab-1)},\  \ |(P_L,\
c)|=\frac{a(b(1-c)-c)}{c-(1-ac)}|(c_*,\ P_L)|.
$$
Since $f^{2k}$ maps $(c_k,\ P_L)$ homeomorphically to $(c_*,\ P_L)$,
$$
|(c_n,\ P_L)|=\frac{1}{a^nb^n}|(c_*,\ P_L)|, \ \ \ \ \ \ |(c_{n-1},\
P_L)|=\frac{1}{a^{n-1}b^{n-1}}|(c_*,\ P_L)|.
$$
It follows that
$$
|(c_{n-1},\ c_n)|=|(c_{n-1},\ P_L)|-|(c_n,\
P_L)|=(\frac{1}{a^{n-1}b^{n-1}}-\frac{1}{a^nb^n})|(c_*,\ P_L)|,
$$
and
$$
|(c_{n},\ c)|=|(c_n,\ P_L)|+|(P_L,\
c)|=(\frac{1}{a^nb^n}+\frac{a(b(1-c)-c)}{c-(1-ac)})|(c_*,\ P_L)|.
$$
Hence, (\ref{length-11}) is equivalent to
\begin{equation} \label{length-1}
 \frac{2}{ab}+(ab)^{n-1}\frac{a(b(1-c)-c)}{c-(1-ac)} \ge 1.
\end{equation}
Remember that $f$ satisfies the additional condition (\ref{k=2}),
i.e.,  $0<f(0)=1-ac<c<b(1-c)=f(1)<1$, $\frac{a(b(1-c)-c)}{c-(1-ac)}$
is always positive.

Since $ab>1$, it is enough to prove (\ref{length-1}) with $n=1$,
i.e.,
\begin{equation} \label{length-2}
 F(b):=\frac{2}{ab}+\frac{a(b(1-c)-c)}{c-(1-ac)} \ge 1.
\end{equation}

If $ab \le 2$, then $F(b) \ge \frac{2}{ab} \ge 1$. For the case
$ab>2$, $a$ is fixed,
$$F'(b)=-\frac{2}{ab^2}+ \frac{a(1-c)}{c-(1-ac)}=\frac{a^2b^2(1-c)-2(c-(1-ac))}{ab^2(c-(1-ac))}.$$

Using $ab>2$ and $f(1)=b(1-c)>c$,
$$
a^2b^2(1-c)-2(c-(1-ac))>a^2bc-2c+2-2ac>ac(ab-2)+2(1-c)>0.
$$
So $F'(b)>0$ when $ab>2$. It follows that $F(b)>F(\frac{2}{a})=1$.
(\ref{length-2}) holds.

For the second inequality, by similar calculations, one can see that
(\ref{length-12}) is equivalent to
\begin{equation}
\label{length-121}
 \frac{2}{ab}+(ab)^{n-1}\frac{b(c-(1-ac))}{b(1-c)-c} \ge 1.
\end{equation}
We shall prove (\ref{length-121}) with $n=1$, i.e.,
\begin{equation} \label{length-122}
G(a):=\frac{2}{ab}+\frac{b(c-(1-ac))}{b(1-c)-c} \ge 1.
\end{equation}

If $ab \le 2$, then $G(a) \ge  \frac{2}{ab} \ge 1$. When $ab>2$,
$$G'(a)=-\frac{2}{a^2b}+ \frac{bc}{b(1-c)-c}=\frac{a^2b^2c-2(b(1-c)-c)}{a^2b(b(1-c)-c)}.$$

Using $ab>2$ and $f(0)=1-ac<c$, one obtains
$$
a^2b^2c-2(b(1-c)-c)>2abc-2b(1-c)+2c>2b(c-(1-ac))+2c>0.
$$
So $G'(a)>0$ when $ab>2$. It follows that $G(a)>G(\frac{2}{b})=1$.
(\ref{length-122}) holds.

\end{proof}

\vspace{0.2cm}

\begin{lemma}\label{inequality} Let $\{c_n\}$ and $\{c_n'\}$ be defined as (\ref{c_n}) and
(\ref{c_n'}).

\begin{enumerate}
\item Suppose $f(0) \in (c_{k-1},\ c_k]$, we have
 $$ab a^{k+1}b^{k}>1+a^{k+1}b^k \ \ \ \ \ \ \ and \ \ \ \ \ \ \ a^{k+1}b^k>1.$$

\item Suppose $f(1) \in [c_{k}',\ c_{k-1}')$, we have
$$ab a^{k}b^{k+1}>1+a^{k}b^{k+1} \ \ \ \ \ \ \  and \ \ \ \ \ \ \ a^{k}b^{k+1}>1.$$
\end{enumerate}
\end{lemma}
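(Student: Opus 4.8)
The plan is to rewrite the two products $a^{k+1}b^k$ and $a^kb^{k+1}$ as ratios of lengths of intervals, and then read off the required inequalities from the positions of $f(0)$ and $f(1)$ relative to the sequences $\{c_n\}$, $\{c_n'\}$, using Lemma \ref{ineq-1} for the comparison of the lengths that appear.

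Take part (1) and suppose $f(0)\in(c_{k-1},c_k]$; note this forces $c_*<f(0)<P_L$. First I would pin down the branch itinerary along the relevant piece of orbit. Since $f^2$ is linear with slope $ab$ on $[c_*,P_L]$ and $f^2(c_n)=c_{n-1}$, the iterate $f^{2(k-1)}$ carries $[c_{k-1},c_k]$ linearly onto $[c_*,c_1]$ with slope $(ab)^{k-1}$; one further application of $f$ (left branch, slope $a$, as $[c_*,c_1]\subset[0,c)$) lands in $[c,f(c_1)]$, the next (right branch, slope $b$) in $[0,c_*]$, and the next (left branch, slope $a$) in $[f(0),c]$ --- the itinerary of the left endpoint being $c_{k-1}\mapsto\cdots\mapsto c_*\mapsto c\mapsto 0\mapsto f(0)$. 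Hence $f^{2k+1}$ maps $[c_{k-1},c_k]$ linearly onto $[f(0),c]$ with slope $a^{k+1}b^k$, so that
\[
a^{k+1}b^k=\frac{c-f(0)}{c_k-c_{k-1}}.
\]
From the identities $|(c_n,P_L)|=(ab)^{-n}|(c_*,P_L)|$ and $|(c_*,P_L)|=\frac{c-(1-ac)}{a(ab-1)}$ established in the proof of Lemma \ref{ineq-1} one gets $c_k-c_{k-1}=(ab-1)(P_L-c_k)$, and therefore also
\[
a^{k+1}b^k(ab-1)=\frac{c-f(0)}{P_L-c_k}.
\]

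Both bounds of part (1) now drop out. Since $f(0)\le c_k$ and $P_L<c$, the numerator obeys $c-f(0)\ge c-c_k>P_L-c_k$, so $a^{k+1}b^k(ab-1)>1$, which is precisely $ab\,a^{k+1}b^k>1+a^{k+1}b^k$. For $a^{k+1}b^k>1$ I would invoke Lemma \ref{ineq-1} in strict form: the summand $(ab)^{k-1}\frac{a(b(1-c)-c)}{c-(1-ac)}$ in (\ref{length-1}) is strictly positive because $f\in\mathcal{L}_2$ satisfies (AC), so in fact $|(c_{k-1},c_k)|<|(c_k,c)|$; combined with $c-f(0)\ge c-c_k$ this yields $c_k-c_{k-1}<c-f(0)$, whence $a^{k+1}b^k>1$. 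Part (2) is the mirror image: one checks that $f^{2k+1}$ maps $[c_k',c_{k-1}']$ linearly onto $[c,f(1)]$ with slope $a^kb^{k+1}$, so that $a^kb^{k+1}=\frac{f(1)-c}{c_{k-1}'-c_k'}$ and $a^kb^{k+1}(ab-1)=\frac{f(1)-c}{c_k'-P_R}$ with $|(c^*,P_R)|=\frac{b(1-c)-c}{b(ab-1)}$; one then invokes $f(1)\ge c_k'$, $P_R>c$, and the strict form of the second inequality of Lemma \ref{ineq-1}.

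I expect the only point demanding genuine care to be the branch bookkeeping in the second paragraph: one must verify that none of the intervals $[c_{k-1},c_k]$, $f([c_{k-1},c_k])$, $\dots$, $f^{2k}([c_{k-1},c_k])$ straddles the discontinuity $c$ (working with the half-open interval $(c_{k-1},c_k]$ removes any ambiguity at the endpoint), so that $f^{2k+1}$ is honestly linear on $[c_{k-1},c_k]$ with the stated slope and image. This is where the hypothesis $f(0)\in(c_{k-1},c_k]$, which confines us to the interval $(c_*,P_L)$ around the repelling $2$-cycle, and condition (AC) are genuinely used; once the linearity is in hand the remaining estimates are elementary arithmetic with interval lengths.
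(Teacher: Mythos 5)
Your proof is correct and follows essentially the same route as the paper: both hinge on the fact that $f^{2k+1}$ carries $(c_{k-1},\,c_k)$ linearly onto $(f(0),\,c)$ with slope $a^{k+1}b^k$ (so $a^{k+1}b^k=|(f(0),c)|/|(c_{k-1},c_k)|$), combined with Lemma \ref{ineq-1} for the bound $a^{k+1}b^k>1$. The only difference is cosmetic: for the inequality $ab\,a^{k+1}b^k>1+a^{k+1}b^k$ the paper tracks $(c_k,\,P_L)$ under $f^{2k+3}$ and uses the covering $f^{2k+3}((c_k,P_L))\supset (c_k,P_L)\cup(c,P_R)$, whereas you substitute the geometric-series identity $c_k-c_{k-1}=(ab-1)(P_L-c_k)$ into the same length ratio to get $a^{k+1}b^k(ab-1)=\frac{c-f(0)}{P_L-c_k}>1$ --- an equivalent (and slightly cleaner) computation.
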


\begin{proof}

It is necessary to prove (1), (2) can be proved similarly.

Consider the interval $(c_k,\ \ P_L)$, since $f(0) \in (c_{k-1},\
c_k]$, we have
\[
\begin{CD}
(c_k, P_L) @>{f^{2k}}>{(ab)^{k}}> (c_*, P_L) @>{f}>{a}> (c, P_R)@>{
f^2}>{ab}> (f(0), P_R) \supset (c_k, P_L) \cup (c, P_R).
\end{CD}
\]
So we have
$$f^{2k+3}((c_k, P_L))\supset (c_k, P_L) \cup (c, P_R)=f^{2k+1}((c_k, P_L))\cup (c, P_R).$$
It follows that $$a^{k+2}b^{k+1}|(c_k,\ \ P_L)|>|(c_k,\ P_L)|+|(c,\
P_R)|.$$ Notice that $|(c,\ P_R)|=a^{k+1}b^k |(c_k,\ P_L)|$, we
obtain $a^{k+2}b^{k+1}>1+a^{k+1}b^k$.

Consider the interval $(c_{k-1},\ \ c_k)$, we obtain
\[
\begin{CD}
(c_{k-1},\ \ c_k) @>{f^{2(k-1)}}>{(ab)^{k-1}}> (c_*,\ c_1)
@>{f^2}>{ab}> (0,\ c_*)@>{ f}>{a}> (f(0),\ c).
\end{CD}
\]

Similarly, it follows $$a^{k+1}b^{k}|(c_{k-1},\ \ c_k)|=|(f(0),\
c)|.$$ By Lemma \ref{ineq-1} and the condition that $f(0) \in
(c_{k-1}, \ c_{k}]$,
$$
a^{k+1}b^{k}=\frac{|(f(0),\ \ c)|}{|(c_{k-1},\ \
c_k)|}>\frac{|(c_k,\ c)|}{|(c_{k-1},\ c_k)|} \ge 1.
$$
\end{proof}

\begin{lemma}\label{M(x)} Suppose $a<1<b$, $A=[0,\ c_*]$,
$$M(x):=M_A(x)=\prod_{i=0}^{n_A(x)-1}f'(f^i(x)),$$  where
$n_A(x)$ is the first exit time of the orbit  $O(x)$ from $A$. If
$f(0) \in (c_*,\ c)$, then
\begin{equation}\label{M(x)-0}
M(x):=M_A(x)>1,\ \ \ \ \ \forall \ x  \ge f(0).
\end{equation}

Similarly, suppose $a>1>b$, $B=[c^*,\ 1]$,
$M_B(x)=\prod_{i=0}^{n_B(x)-1}f'(f^i(x))$, where $n_B(x)$ is the
first exit time of the orbit  $O(x)$ from $B$. If $f(1) \in (c,\
c^*)$, then
\begin{equation}\label{M(x)-00}
M_B(x)>1,\ \ \ \ \ \forall\ x\le f(1).
\end{equation}
\end{lemma}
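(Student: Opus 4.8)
The plan is to analyze, for each $x\ge f(0)$, the finite orbit segment that realizes the first exit from $A=[0,c_*]$, and to read $M_A(x)$ off its itinerary. First I would note the geometric consequences of the hypothesis $f(0)\in(c_*,c)$: since $f$ is increasing on $[0,c)$ with $f(0)>c_*$, the image $f([0,c))=[f(0),1)$ misses $A$, so $f(A)\cap A=\emptyset$; hence every visit to $A$ lasts exactly one iterate, and the orbit can enter $A$ only through $J:=(c,\,c+c_*/b]$, the unique piece of $f^{-1}(A)$, which lies on the right branch. Therefore, if $x\ge f(0)$ (so $x\notin A$) and $j\ge 1$ is the first time $f^{j}(x)\in A$, then $n_A(x)=j+1$, $f^{j-1}(x)\in J$, and $M_A(x)=\prod_{i=0}^{j}f'(f^{i}(x))$ with the two last factors $f'(f^{j-1}(x))=b$ and $f'(f^{j}(x))=a$. (If the orbit of $x$ never meets $A$ the partial products $(f^{n})'(x)$ tend to infinity by the symbolic count below, so there is nothing to prove; in the applications the relevant orbits do enter $A$.)

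Writing $v_{i}:=f'(f^{i}(x))\in\{a,b\}$, the second step is symbolic bookkeeping. The points $f^{0}(x),\dots,f^{j-1}(x)$ all lie in $(c_*,1]$, and since $f$ maps $(c_*,c)$ into $(c,1)$ one has $v_{i}=a\Rightarrow v_{i+1}=b$ for $i<j$; moreover $v_{j-1}=b$. So in the word $v_{0}\cdots v_{j-1}$ no two $a$'s are adjacent and it ends with $b$, which makes $i\mapsto i+1$ an injection of the $a$-positions into the $b$-positions; hence $m:=\#\{i\le j-1:v_{i}=a\}$ satisfies $m\le j-m$. Consequently $M_A(x)=a^{m+1}b^{\,j-m}=a\,(ab)^{m}\,b^{\,j-2m}$, and, using $ab>1$ (recall $f^{2}$ has slope $ab>1$ on $[c_*,P_L]$) and $b>1$, whenever $j>2m$ this is at least $ab>1$. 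This disposes of every itinerary with a surplus of right-branch iterates.

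The remaining case $j=2m$ (with $m\ge 1$) is the heart of the argument, and here I would exploit the material already prepared in this section. When $j=2m$ the word $v_{0}\cdots v_{j-1}$ has equally many $a$'s and $b$'s, no adjacent $a$'s, and ends in $b$: this forces it to be $(ab)^{m}$, so $v_{0}=a$ and $x\in(c_*,c)$. Using that $f^{2}$ is linear with slope $ab$ on $(c_*,c)$ and that $f^{2}(c_{n})=c_{n-1}$ along the sequence \eqref{c_n}, I would pull the entry condition $f^{j-1}(x)\in J$ back step by step: a direct computation gives $f^{2m-2}(x)\in(c_*,c_{1}]=f^{-1}(J)\cap(c_*,c)$, then $f^{2m-4}(x)\in(c_{1},c_{2}]$, and inductively $x\in(c_{m-1},c_{m}]$. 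If $f(0)\in[P_L,c)$ this is impossible, since then $x\ge f(0)\ge P_L>c_{m}$, so the case does not occur; otherwise $f(0)\in(c_{k-1},c_{k}]$ for some $k\ge 1$, and combining $x\ge f(0)$ with $x\in(c_{m-1},c_{m}]$ forces $m\ge k$. Lemma \ref{inequality}(1) then gives $a^{k+1}b^{k}>1$, whence $M_A(x)=a^{m+1}b^{m}=a^{k+1}b^{k}(ab)^{m-k}>1$; this proves \eqref{M(x)-0}. The second assertion \eqref{M(x)-00} is entirely symmetric, with $B=[c^{*},1]$, the sequence \eqref{c_n'}, the fixed point $P_R$, and Lemma \ref{inequality}(2) replacing $A$, \eqref{c_n}, $P_L$, and Lemma \ref{inequality}(1).

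I expect the main obstacle to be precisely the case $j=2m$: recognizing that the itinerary must be the alternating word $(ab)^{m}$, and then converting its single entry constraint into the sharp location $x\in(c_{m-1},c_{m}]$ by pulling back through the linear branches of $f^{2}$. It is this location that makes the hypothesis $f(0)\in(c_{k-1},c_k]$ effective and lets Lemma \ref{inequality}(1) close the estimate; the $j>2m$ case and the reduction in the first paragraph are routine by comparison.
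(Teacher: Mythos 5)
Your proof is correct and follows essentially the same route as the paper's: the two key ingredients (no two consecutive factors $a$ can occur before the orbit enters $A$, so that $ab>1$ disposes of every itinerary with a surplus of right-branch iterates, and Lemma \ref{inequality} for the exactly balanced itineraries, which are forced to lie in some $(c_{m-1},\ c_m]$) are precisely the ones the paper uses. The only difference is organizational --- the paper splits into cases by the position of $x$ relative to $c$, $P_L$ and $f(0)$, whereas you split by the combinatorial type of the word $v_0\cdots v_j$, which makes the reduction to Lemma \ref{inequality} slightly more transparent but is not a different argument.
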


\begin{proof} We only prove the Lemma for case $a<1<b$, the proof can
adapt to the case $a>1>b$ easily.

Since $f(x)> c$ for all $x \in (c_*,\ c)$ and $ab>1$, we know that
$M(x)=\infty$ when $n_A(x)=\infty$. In what follows, we show that
$M(x) > 1$ for $x \in I$ with $n_A(x)<\infty$.

The main reason for us to consider the first exit decomposition with
respect to $A=[0,\ c_*]$ is that $f$ maps $(c_*,\ c)$
homeomorphically to $(c, \ 1)$, which implies that any orbit with
initial position $x \notin A$ can not stay on the left of $c$ two
consecutive times before it visits $A$. This fact is useful for us
to obtain lower bound of $M_A(x)$.

When $f(0)>c_*$, each orbit of $f$ can stay on the left of $c$ at
most two consecutive times. To check (\ref{M(x)-0}), we consider
three cases:

If $x \ge c_+$, the product $M(x)$ begin with $b$ and end with only
one $a$, and it can not have two consecutive $a$. So $M(x)>1$
because $ab>1$.

If $x \in (P_L,\ c_-]$, then $f(x) \in (P_R,\ 1]$. There is a
nonnegative integer $m$ such that $f^{2m}(f(x)) \ge c^*$. So
$f^{2m+2}(x) \ge c$ and $M(f^{2m+2}(x))> 1$. It follows
$$M(x)=(ab)^{m+1}M(f^{2m+2}(x))> 1.$$

If $f(0) \in (c_*,\ P_L)$, there exists positive integer $k$ so that
$f(0)\in (c_{k-1},\ P_L)$. For $x \in (f(0),\ P_L)$,  one can see
$M(x)=(ab)^ma^{k+1}b^k$ for some $m \ge 0$. By Lemma
\ref{inequality},
$$M(x) \ge a^{k+1}b^k > 1.$$
\end{proof}

Let $ i=\min\{k: f^k(0)>c\}$ be the least integer so that
$f^i(0)>c$. Each orbit of $f$ can stay consecutively on the left of
$c$ at most $i$ times. $f(0) \le c_*$ implies $i \ge 3$.

Let $ j=\min\{k: f^k(1)<c\}$ be the least integer so that
$f^i(1)<c$. $f(1) \ge c^*$ implies $j \ge 3$.

\begin{lemma}\label{baj} Let $i$ and $j$ be defined as above, we have
\begin{equation}\label{baj1}
ba^{i-1}>1+a+\cdots+a^{i-2},
\end{equation}
\begin{equation}\label{baj2}
ab^{j-1}>1+b+\cdots+b^{j-2}.
\end{equation}
\end{lemma}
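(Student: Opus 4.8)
The plan is to exploit the fact that the forward orbit of the left endpoint $0$ travels along the linear branch of slope $a$ for its first $i-1$ iterates, so that these orbit points form an explicit geometric sum, and then to convert the two elementary inequalities hidden in the definition of $i$ — namely $f^{i-1}(0)\le c<f^{i}(0)$ — into the asserted estimate by a short computation. It suffices to prove (\ref{baj1}): inequality (\ref{baj2}) is the mirror statement and follows by repeating the argument for the orbit of the right endpoint $1$ under the branch of slope $b$, or by conjugating $f_{a,b,c}$ with $x\mapsto 1-x$ (which sends it to $f_{b,a,1-c}$ and swaps $0\leftrightarrow1$, $a\leftrightarrow b$, and the two halves of (AC)). Moreover (\ref{baj1}) is only invoked when $a<1<b$, so I will assume $a<1$.

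Put $d:=f(0)=1-ac$, so $0<d<c$ by (AC). By minimality of $i$ one has $f^{k}(0)\in[0,c)$ for $0\le k\le i-1$, and since $f$ is linear of slope $a$ there, $f^{k+1}(0)=af^{k}(0)+d$; hence $f^{k}(0)=d\,(1+a+\cdots+a^{k-1})$ for $1\le k\le i$, and $f^{i}(0)\in f([0,c))=[d,1)$, so $f^{i}(0)<1$. The defining relations for $i$ then read
\[
d\,(1+a+\cdots+a^{i-2})\le c,\qquad f^{i}(0)=d\,(1+a+\cdots+a^{i-1})>c .
\]
Adding $a^{i-1}$ to both sides of the target inequality and multiplying by $d>0$ shows that (\ref{baj1}) is equivalent to the single inequality $d\,a^{i-1}(b+1)>f^{i}(0)$, so it is enough to prove that.

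The heart of the matter is the lower bound $d\,a^{i-1}\ge 1-c$. I would obtain it from $f^{i-1}(0)\le c$, i.e. $d(1+a+\cdots+a^{i-2})\le c$: writing the sum as $(1-a^{i-1})/(1-a)$ and clearing the positive factor $1-a$ (this is exactly where $a<1$ is used), then substituting $d=1-ac$ and using $a^{i-1}-a^{i}c=a^{i-1}(1-ac)$, the inequality reduces precisely to $1-c\le d\,a^{i-1}$. Granting this, the other half of (AC), $f(1)=b(1-c)>c$, gives $b+1>1/(1-c)$, hence
\[
d\,a^{i-1}(b+1)\ \ge\ (1-c)(b+1)\ >\ 1\ >\ f^{i}(0),
\]
which is what was needed. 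For (\ref{baj2}) the parallel computation uses $d':=1-f(1)>0$, the formula $1-f^{k}(1)=d'(1+b+\cdots+b^{k-1})$, the reduction to $d'b^{j-1}(a+1)>1-f^{j}(1)$, the key bound $d'b^{j-1}\ge c$ obtained from $f^{j-1}(1)\ge c$ (here using $b<1$, since (\ref{baj2}) is only needed when $a>1>b$), and the finish $f(0)=1-ac<c\Rightarrow a+1>1/c$.

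I expect the only subtle point to be the sign bookkeeping in the key bound $d\,a^{i-1}\ge 1-c$: passing from $d(1+a+\cdots+a^{i-2})\le c$ to $a^{i-1}(1-ac)\ge1-c$ preserves or reverses the inequality according to the sign of $1-a$, so it matters that $a<1$ here — the case $a=1$ being trivial and $a>1$ not occurring in the application. Everything after that reduction is a one-line consequence of (AC).
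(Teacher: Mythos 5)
Your computation is correct on the range it covers, but it proves strictly less than the lemma asserts, and the missing cases are actually needed. You establish (\ref{baj1}) only under the extra hypothesis $a<1$, and (\ref{baj2}) only under $b<1$, justifying the restriction by the claim that the lemma is invoked only when $a<1<b$ (resp.\ $a>1>b$). That claim is false: the inequality $ab^{2}>1+b$, which is (\ref{baj2}) with $j=3$, is cited from Lemma \ref{baj} in Case A of the proof of Theorem \ref{lower bound} (where $a\ge 1$ and $b\ge 1$) and again in Subcase (i) of Case B (where $b>1$); symmetrically, Case C requires (\ref{baj1}) with $a>1$. So the restricted statement you prove does not support the uses made of the lemma.

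The restriction is not merely a presentational shortcut in your argument: the key intermediate bound $d\,a^{i-1}\ge 1-c$ is obtained by clearing the factor $1-a$ from $d\,(1-a^{i-1})/(1-a)\le c$, the inequality reverses when $a>1$, and the bound itself is then false. For instance, with $a=2$ and $i=3$ the condition $f^{2}(0)=3(1-2c)<c$ forces $c>3/7$, and then $d\,a^{2}=4(1-2c)<1-c$ (e.g.\ $c=0.44$ gives $0.48<0.56$). The paper's own proof needs no case distinction on $a$: from $f^{i-1}(0)=(1-ac)(1+a+\cdots+a^{i-2})<c$ one moves $ac(1+a+\cdots+a^{i-2})$ to the right to get $c>\frac{1+a+\cdots+a^{i-2}}{1+a+\cdots+a^{i-1}}$, while (AC) gives $c<\frac{b}{1+b}$; comparing the two fractions and clearing the positive denominators is exactly $ba^{i-1}>1+a+\cdots+a^{i-2}$, valid for all $a,b>0$ with $ab>1$. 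Your route for $a<1$ is a correct but weaker variant of this; to close the gap, replace the step through $d\,a^{i-1}\ge 1-c$ by this sign-free comparison (and likewise for (\ref{baj2})).
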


\begin{proof}Since $i$ is the least positive integer such that
$f^{i-1}(0)<c<f^i(0)$, by direct calculation,
$$
f(0)=1-ac, f^2(0)=(1-ac)(1+a),\  \ldots,
f^{i-1}(0)=(1-ac)(1+a+\cdots+a^{i-2})<c.
$$
It follows
$$
c>\frac{1+a+\cdots+a^{i-2}}{1+a+\cdots+a^{i-1}}.
$$
On the other hand, by assumption (\ref{k=2}), $c<f(1)=b(1-c)$
implies $c<\frac{b}{1+b}$. We get
$$
\frac{1+a+\cdots+a^{i-2}}{1+a+\cdots+a^{i-1}}<\frac{b}{1+b},
$$
which is equivalent to (\ref{baj1}).

(\ref{baj2}) can be proved by similar calculations.
\end{proof}

Remember that $c_1$ and $c_1'$ are defined by (\ref{c_n}) and
(\ref{c_n'}).

\begin{lemma} \label{control} Let $i$ and $j$ be defined as above, we have
\begin{equation} \label{control-1}
ba^i<1\ \ \ \ \ \  implies \ \ \ \ \ \ f^{i-1}(0) \in (c_1,\ c),
\end{equation}
\begin{equation} \label{control-2}
ab^j<1 \ \ \ \ \ \  implies \ \ \ \ \ \  f^{j-1}(1) \in (c,\ c_1').
\end{equation}

\end{lemma}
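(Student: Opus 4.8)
The plan is to prove the two assertions together, since they are interchanged by the left–right symmetry of the family: conjugating $f_{a,b,c}$ by $x\mapsto 1-x$ gives $f_{b,a,1-c}$, which swaps the two branches, hence the roles of $0$ and $1$, of $i$ and $j$, and of $c_1$ and $c_1'$. So I would only prove (\ref{control-1}) and let (\ref{control-2}) follow by symmetry.

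\emph{Step 1: locating $f^{i-1}(0)$ for free.} By the definition of $i$ the points $0,f(0),\dots,f^{i-1}(0)$ all lie in $[0,c)$, where $f$ acts by the affine map $x\mapsto ax+1-ac$; hence $f^{i-1}(0)<c$. Since $f^i(0)=a\,f^{i-1}(0)+1-ac>c$, this also forces $f^{i-1}(0)>\frac{c-(1-ac)}{a}=c_*$. Thus $f^{i-1}(0)\in(c_*,c)$ with no hypothesis at all, and the entire content of (\ref{control-1}) is the single inequality $f^{i-1}(0)>c_1$.

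\emph{Step 2: make it explicit and use $ba^i<1$.} Iterating the left branch gives the closed form $f^{k}(0)=(1-ac)\sum_{j=0}^{k-1}a^{j}$ for $0\le k\le i-1$; and from (\ref{c_n}), since $f^2$ is affine on $[c_*,P_L]$ with slope $ab$ and $f^2(c_*)=0$, $f^2(P_L)=P_L$, one gets $c_1=c_*\,\frac{ab+1}{ab}$ with $c_*=\frac{c-(1-ac)}{a}$. The hypothesis $ba^i<1$ is exactly $a^{i-1}<\frac1{ab}$ (and, together with $b>1$, forces $a<1$), so
\[
f^{i-1}(0)=(1-ac)\,\frac{1-a^{i-1}}{1-a}\;>\;(1-ac)\,\frac{ab-1}{ab(1-a)} .
\]
Hence it would suffice to prove the $i$-free inequality $(1-ac)\frac{ab-1}{ab(1-a)}\ge c_*\frac{ab+1}{ab}$, i.e., after clearing denominators,
\[
a(1-ac)(ab-1)\ \ge\ (1-a)(ab+1)\bigl(c-(1-ac)\bigr).\qquad(\star)
\]

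\emph{Step 3: proving $(\star)$, and the obstacle.} Expanding and using $ab>1$, $(\star)$ is equivalent to $c\le\frac{ab-2a+1}{ab+1-2a^2}$ (one checks the denominator is positive, since $ba^i<1$ with $i\ge3$ forces $a<\tfrac{\sqrt5-1}{2}$, whence $2a^2<1<ab$). When $a\ge\tfrac12$ this is immediate from $f\in\mathcal{L}_2$: the condition $c<b(1-c)$ gives $c<\frac{b}{1+b}$, and $\frac{b}{1+b}\le\frac{ab-2a+1}{ab+1-2a^2}$ simplifies to $(2a-1)(ab-1)\ge0$, which holds. The case $a<\tfrac12$ is the real obstacle: there $\frac{b}{1+b}>\frac{ab-2a+1}{ab+1-2a^2}$, so $c<\frac{b}{1+b}$ no longer suffices, and the estimate $a^{i-1}<\frac1{ab}$ used in Step 2 is too wasteful. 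To treat it I would keep $a^{i-1}$ coupled to $b$ rather than discarding it — exploiting that $i$, hence $a^{i-1}$, is pinned by $a$ and $c$ through $f^{i-1}(0)<c<f^i(0)$, that $i\ge3$ is equivalent to $(1-ac)(1+a)\le c$ and therefore forces $a^2b>1+a$, and the Lorenz bound $b<\frac1{1-c}$ — and squeeze out the inequality from these. I expect this bookkeeping to be the hard part, since numerically the resulting inequality can be very nearly an equality.
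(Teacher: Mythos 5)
Your symmetry reduction and Step 1 are fine, and the algebra in Steps 2--3 is correct as far as it goes, but the approach has a genuine gap that cannot be patched along the lines you set up: the inequality $(\star)$ to which you reduce the problem is actually \emph{false} for some admissible parameters. Take $a=0.4$, $b=9.5$, $c=0.9$. Then $f\in\mathcal{L}_2$ (indeed $f(0)=0.64<c<f(1)=0.95$), $ab=3.8>1$, $f(0)=0.64\le c_*=0.65$ so $i\ge 3$; one computes $f^2(0)=0.896<c<f^3(0)$, so $i=3$ and $ba^i=0.608<1$. Here $c_1=c_*\frac{ab+1}{ab}\approx 0.8211$ and $f^{i-1}(0)=0.896$, so the lemma's conclusion holds; but your lower bound $(1-ac)\frac{ab-1}{ab(1-a)}\approx 0.786$ falls \emph{below} $c_1$, i.e. $c=0.9>\frac{ab-2a+1}{ab+1-2a^2}\approx 0.893$ and $(\star)$ fails. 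The root cause is the relaxation $a^{i-1}<\frac{1}{ab}$ in Step 2: it severs the coupling between $i$ and $(a,c)$ imposed by $f^{i-1}(0)<c<f^{i}(0)$, and that loss is fatal rather than merely making the estimate tight. So the program you sketch at the end of Step 3 is not optional bookkeeping; without it there is no proof, and carrying it out essentially amounts to solving the problem afresh.

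For comparison, the paper avoids all of this with a short dynamical argument. Since $f^{i-1}$ maps $(0,f(0))$ homeomorphically onto $(f^{i-1}(0),f^i(0))\ni c$, there is $y\in(0,f(0))$ with $f^{i-1}(y)=c$, and since $f^2$ maps $(c_*,c_1)$ onto $(0,c_*)\supset(0,y)$ with slope $ab$, there is $z\in(c_*,c_1)$ with $f^2(z)=y$. Hence $f^{i+1}$ maps $(c_*,z)$ onto $(f^{i-1}(0),c)$ with derivative $ba^i$, so $ba^i=|(f^{i-1}(0),c)|/|(c_*,z)|$. If $f^{i-1}(0)\le c_1$, this ratio is at least $|(c_1,c)|/|(c_*,c_1)|$, which is $\ge 1$ by Lemma \ref{ineq-1}, contradicting $ba^i<1$. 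In other words, the paper uses the exact identity $ba^i|(c_*,z)|=|(f^{i-1}(0),c)|$, which automatically carries the information about $i$ that your relaxation throws away, and then only needs the one clean length comparison of Lemma \ref{ineq-1}. If you want to salvage a computational proof, you would need to retain $a^{i-1}$ and use $f^{i-1}(0)<c$ (equivalently $c\ge\frac{1+a+\cdots+a^{i-2}}{1+a+\cdots+a^{i-1}}$) together with $c<\frac{b}{1+b}$ simultaneously; but at that point you are reproducing the content of Lemmas \ref{ineq-1} and \ref{baj} by hand.
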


\begin{proof}
We only show (\ref{control-1}). By the definition of $i$,
$$
0<f(0)<f^2(0)<\cdots<f^{i-1}(0)<c<f^i(0).
$$
Since $f^{i-1}$ maps $(0,\ f(0))$  to $(f^{i-1}(0),\ f^i(0)) \ni c$
homeomorphically, there exists $y \in (0,\ f(0))$ so that
$f^{i-1}(y)=c$.

Observe that
\[
\begin{CD}
(c_*,\ \ c_1) @>{f^{2}}>{ab}> (0,\ c_*),
\end{CD}
\]
there exists $z \in (c_*,\ \ c_1)$ such that $f^2(z)=y$.

Consider the interval $(c_*,\ \ z)$, we have
\[
\begin{CD}
(c_*,\ \ z) @>{f^{2}}>{ab}> (0,\ y) @>{f^{i-1}}>{a^{i-1}}>
(f^{i-1}(0),\ c).
\end{CD}
\]
It follows that $$ba^i|(c_*,\ \ z)|=|(f^{i-1}(0),\ \ c)|.$$

If $f^{i-1}(0)<c_1$,  by Lemma \ref{ineq-1},
$$
ba^i=\frac{|(f^{i-1}(0),\ \ c)|}{|(c_*,\ \ z)|}>\frac{|(c_1,\
c)|}{|(c_*,\ c_1)|} \ge 1.
$$
We obtain a contradiction. Hence, (\ref{control-1}) is true.
\end{proof}

\vspace{0.2cm}

\begin{lemma}\label{M(x)-1}  Suppose $a<1<b$, $A=[0,\ c_*]$, $M(x):=M_A(x)$ is defined as in (\ref{M(x)-0}). If
$f(0) <c_*$, then
\begin{equation}
M(x):=M_A(x)>1,\ \ \ \ \ \forall x\ge c_1.
\end{equation}

Similarly, Suppose $a>1>b$, $B=[c^*,\ 1]$, $M_B(x)$ is defined as in
(\ref{M(x)-00}). If $f(1) >c^*$, then
\begin{equation}
M_B(x)>1,\ \ \ \ \ \forall x\le c_1'.
\end{equation}
\end{lemma}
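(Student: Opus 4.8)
I would prove the first assertion; the second follows by applying it to $\sigma f\sigma^{-1}=f_{b,a,1-c}$ with $\sigma(x)=1-x$, since this reflection interchanges the cases $a<1<b$ and $a>1>b$ and carries $(B,c^{*},c_{1}')$ to $(A,c_{*},c_{1})$. So assume $a<1<b$, $f(0)<c_{*}$, and fix $x\ge c_{1}$. If $n_{A}(x)=\infty$ one argues as in Lemma \ref{M(x)}: the orbit of $x$ never enters $A=[0,c_{*}]$, hence (as $f$ maps $(c_{*},c)$ into $(c,1]$) never visits $(c_{*},c)$ twice in a row, so the slope--product over any two consecutive iterates is at least $\min\{ab,b^{2}\}=ab>1$ and $M(x)=\infty$. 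Otherwise let $j_{0}\ge1$ be the first time $f^{j_{0}}(x)\in A$ (here $j_{0}\ge1$ since $x>c_{*}$); splitting the product at $j_{0}$ in the spirit of the first-exit decomposition gives $M(x)=P_{1}a^{m}$, where $P_{1}=\prod_{t=0}^{j_{0}-1}f'(f^{t}(x))$ and $m=n_{A}(x)-j_{0}\ge1$ is the number of steps the orbit spends in $A$.

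The argument rests on two structural facts. First, the orbit can enter $A$ only through the right branch near $c$: if $f^{j_{0}-1}(x)\in(c_{*},c)$ then $f^{j_{0}}(x)=af^{j_{0}-1}(x)+1-ac>c$, impossible, so $f^{j_{0}-1}(x)\in(c,c+c_{*}/b]$ and the last factor of $P_{1}$ is $b$; together with the fact that no two consecutive iterates lie in $(c_{*},c)$, this shows $P_{1}$ is a concatenation of blocks each equal to $b$ or $ab$, the last block being $b$ or $ab$. Second, there is a sojourn bound $m\le i_{*}$, where $i_{*}:=\min\{k\ge1:f^{k}(0)>c_{*}\}$: on $A\subset[0,c)$ the map has slope $a$ and is increasing, so if the orbit enters $A$ at $y=f^{j_{0}}(x)>0$ then $f^{i_{*}}(y)>f^{i_{*}}(0)>c_{*}$ by monotonicity, forcing it out of $A$ by step $i_{*}$. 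Note $i_{*}\ge2$ since $f(0)<c_{*}$, and $i\ge3$ by the remark preceding Lemma \ref{baj}.

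The crucial estimate is $ba^{i_{*}}>1$, which I would prove by cases on $ba^{i}$. If $ba^{i}>1$ this is immediate from $i_{*}\le i$ and $a<1$. If $ba^{i}\le1$, the computation inside the proof of Lemma \ref{control} (which uses Lemma \ref{ineq-1}) yields $f^{i-1}(0)\ge c_{1}>c_{*}$, hence $i_{*}\le i-1$, and then $ba^{i_{*}}\ge ba^{i-1}>1+a+\cdots+a^{i-2}\ge1$ by Lemma \ref{baj}. Consequently $ba^{m}\ge ba^{i_{*}}>1$ (as $m\le i_{*}$ and $a<1$), and likewise $ba^{2}\ge ba^{i-1}>1$ (as $i\ge3$).

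To finish I would inspect the last block of $P_{1}$. If it equals $b$, then $P_{1}\ge b$ and $M(x)\ge ba^{m}>1$. If it equals $ab$ and $P_{1}$ has at least two blocks, then (each block being $\ge ab>1$) $P_{1}\ge(ab)^{2}$, so $M(x)\ge(ab)^{2}a^{m}=(ba^{m})(ba^{2})>1$. The only remaining case is $P_{1}=ab$, i.e.\ $j_{0}=2$ with $x\in(c_{*},c)$; since $f^{2}$ has slope $ab$ on $(c_{*},c)$ and $f^{2}(c_{1})=c_{*}$, the hypothesis $x\ge c_{1}$ forces $x=c_{1}$, $f^{2}(c_{1})=c_{*}$, whence $m=1$ and $M(c_{1})=a\cdot b\cdot a=ba^{2}>1$. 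I expect the main difficulty to be the inequality $ba^{i_{*}}>1$: the analogous bound with $i$ in place of $i_{*}$ is genuinely false, so one must combine the sharp sojourn estimate $m\le i_{*}$ with the consequence of Lemma \ref{control} that $ba^{i}\le1$ forces $i_{*}\le i-1$. The boundary case $P_{1}=ab$, where the hypothesis $x\ge c_{1}$ is exactly what pins $x$ down to the single point $c_{1}$, is the other point requiring care.
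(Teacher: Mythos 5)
Your proof is correct, and it rests on the same ingredients as the paper's: the facts that an orbit outside $A$ cannot lie in $(c_*,\,c)$ twice in a row and can only enter $A$ through the right branch, the sojourn bound in $A$, and the inequalities $ba^{i-1}>1$ (Lemma \ref{baj}) and $ba^2\ge ba^{i-1}$ coming from $i\ge 3$. The organization, however, is genuinely different. The paper argues by the position of $x$: it writes $M(x)$ as an explicit word (for instance $M(x)=abab\,a^{m}$ with $0<m\le i-1$ when $x\in(c_1,\,c_2]$) and dismisses the remaining locations of $x$ with a reference to the proof of Lemma \ref{M(x)}. You instead split $M(x)=P_1a^{m}$ uniformly at the first entry time into $A$, decompose $P_1$ into blocks $b$ and $ab$ (each exceeding $1$), bound the sojourn by $m\le i_*$, and then only need to look at the last block; the single degenerate case $P_1=ab$ is exactly where the hypothesis $x\ge c_1$ enters, pinning $x$ down to $c_1$ with $M(c_1)=ba^2>1$. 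This buys a complete and uniform treatment of every $x\ge c_1$ (including $x\ge c$ and $x\in(P_L,\,c)$, which the paper leaves implicit), at the cost of one redundant detour: since $f^{i}(0)>c$ forces $f^{i-1}(0)>c_*$ while $f^{k}(0)<c_*$ for $k\le i-2$, one has $i_*=i-1$ unconditionally, so $ba^{i_*}>1$ follows from Lemma \ref{baj} alone and the case split on $ba^{i}$ with the appeal to Lemma \ref{control} is not needed. The reduction of the second assertion to the first via $x\mapsto 1-x$, which conjugates $f_{a,b,c}$ to $f_{b,a,1-c}$ and carries $(B,\,c^*,\,c_1')$ to $(A,\,c_*,\,c_1)$, is exactly what the paper means by saying the case $a>1>b$ is handled similarly.
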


\begin{proof}
Let $i$ be defined as above. If $x\in (c_1,\ c_2]$, then
$M(x)=ababa^m$ for some $0<m \le i-1$. Since $a<1<b$, we have $M(x)
\ge ababa^{i-1} \ge (ba^2)(ba^{i-1}) \ge 1$. In fact, Lemma
\ref{control}, together with $i \ge 3$, implies that both $ba^{i-1}$
and $ba^2$ are no less than $1$. The remain cases can be shown by
similar arguments in the proof of Lemma \ref{M(x)}.

\end{proof}

\subsection{Proof of Theorem \ref{lower bound}}

Now we present the proof of Theorem \ref{lower bound}.

Let $f\in \mathcal {L}_2$,  $p$ is an $\ell$-periodic point and $q$
is a $r$-periodic point of $f$, $\{p,\ q\}$ is a nice pair of $f$,
and
$$M_p=(f^{\ell})'(c_-)=\prod_{i=0}^{\ell-1}f'(f^i(c_-)),\ \ \ \ \ M_q=(f^{r})'(c_+)=\prod_{i=0}^{r-1}f'(f^i(c_+)).$$
Each factor in $M_p$ and $M_q$ is either $a$ or $b$ because $f$ is
piecewise linear.

Our aim is to show that
$$
(M_p-1)(M_q-1)>1
$$
for each nice pair $\{p,\ q\}$ provided  $[f(0),\ f(1)] \nsubseteq
[P_L,\ P_R]$. Remember that $P_L$, $P_R$, $c_*$ and $c^*$ are all
calculated in (\ref{eq:1}).

The proof can be divided into three cases: both $a \ge 1$ and $b \ge
1$, $a<1<b$, and $a>1>b$.

 \vspace{0.2cm}
 {\bf Case A:} $a\ge 1$ and $b \ge 1$.

 \vspace{0.2cm}

Since $[f(0),\ f(1)] \nsubseteq [P_L,\ P_R]$, we have $f(0)<P_L$ or
$f(1)>P_R$. Without loss of generality, we assume $f(1)>P_R$. It
follows either $f(1) \in (P_R, c^*)$ or $f(1) \ge c^*$.

If $f(1) \in (P_R, c^*)$,  there exists $k$ so that $f(1) \in
[c_{k}',\ c_{k-1}')$, by Lemma \ref{inequality}, we have
$aba^{k}b^{k+1}>1+a^kb^{k+1}$.  $p$ is a nice $\ell$-periodic point
indicates $\ell \ge 2k+3$. In fact, in this case, when $m<2k+2$, the
interval $(f^m(p),\ f^m(c_-))$ does not contain $c_*$ and $c^*$, so
$N((f^{2k+2}(p),\ f^{2k+2}(c_-))) \ge 1$. Since $a \ge 1$ and $b\ge
1$, $M_p=aba^{k}b^{k+1}\prod_{i=2k+3}^{\ell-1}f'(f^i(c_-))\ge
aba^{k}b^{k+1}$ and $M_q=ab \prod_{i=2}^{r-1}f'(f^i(c_+))\ge ab$.
Hence,
$$
(M_p-1)(M_q-1) \ge (aba^{k}b^{k+1}-1)(ab-1)>a^{k}b^{k+1}(ab-1)>1.
$$

If $f(1)\ge c^*$, by  similar arguments as above, we get $M_p \ge
ab^2>1+b$ by Lemma \ref{baj}. Hence, using $M_q \ge ab$, we obtain
$$
(M_p-1)(M_q-1) \ge (ab^2-1)(ab-1)>b(ab-1)>1.
$$

Therefore, the expansion of nice pair (\ref{expansion}) is proved
when both $a$ and $b$ are no less than $1$.

 \vspace{0.2cm}
{\bf Case B:} $a<1<b$.
 \vspace{0.2cm}

In this case, $f$ is contractive on the left side of $c$. We
consider the first exit decomposition of $M_p$ and $M_q$ with
respect to $A=[0,\ c_*]$. Since $f$ maps $(c_*,\ c)$
homeomorphically to $(c, \ 1)$, any orbit with initial position $x
\notin A$ can not stay on the left of $c$ two consecutive times
before it visits $A$.

Suppose $O_r(c_+):=\{c_+,\ f(c_+),\ \ldots, f^{r-1}(c_+)\}$ exits
$A=[0,\ c_*]$ exact $s$ ($s \ge 1$) times. Put $x_j:=f^{n_j}(c_+)$,
where $n_j$ is the $j$th exit time for the finite orbit $O_r(c_+)$
with respect to $A$. According to the first exit decomposition
(\ref{deco}),
 $$M_q=(f^r)'(c_+)=\prod_{k=0}^{r-1}f'(f^k(c_+))=M(c_+)M(x_1)\cdots
M(x_{s-1})W(x_s),$$ where $W(x_s)=f'(x_s)f'(f(x_s))\cdots
f'(f^{r-1}(c_+))$. $W(x_s) \ge 1$ because it can not contain two
consecutive $a$, the last factor is $b$, and $ab>1$.

Similarly, suppose $O_{\ell}(c_-)$ exits $A$ exact $t$ times. Denote
$y_j:=f^{n_j}(c_-)$, one gets
 $$M_p=(f^{\ell})'(c_-)=\prod_{k=0}^{\ell-1}f'(f^k(c_-))=M(c_-)M(y_1)\cdots
M(y_{t-1})W(y_t),$$ and $W(y_t)=f'(y_t)f'(f(y_t))\cdots
f^{\ell-1}(c_-) \ge 1$.

Depending on the position of $f(0)$, we distinguish three subcases:
$P_L \le f(0)<c$, $c_*<f(0)<P_L$ and $f(0)\le c_*$. We shall show
that the expansion of nice pair (\ref{expansion}) holds in each
subcase.

\vspace{0.2cm} (i) {\bf Subcase $P_L<f(0)<c$.}\vspace{0.2cm}

Since $f(x)>f(0)$ for $x \in A$, by Lemma \ref{M(x)} we know that
$M(x_j)\ge 1$, $j=1,\cdots, s-1$,  and $M(y_j) \ge 1$, $j=1,\cdots,
t-1$. It follows that $M_q\ge M(c_+)=ab$ and $M_p \ge M(c_-)$. Since
$[f(0),\ f(1)]$ does not contained in $[P_L,\ P_R]$, we get
$f(1)>P_R$. Depend on the position of $f(1)$, we consider two cases:
$f(1) \in (P_R,\ c^*)$ and $f(1) \ge c^*$.

If $f(1) \in (P_R,\ c^*)$, then there exists positive integer $k$ so
that $f(1) \in [c_k',\ c_{k-1}']$, by Lemma \ref{inequality},
$M(c_-)\ge ab a^kb^{k+1}$. We obtain
$$
(M_p-1)(M_q-1) \ge (aba^kb^{k+1}-1)(ab-1)>a^kb^{k+1}(ab-1)>1.
$$

If $f(1) \ge c^*$, then $f^2(1) \ge c$, which implies that
$M(f^2(1))\ge 1$ because the product $M(f^2(1))$ begin with $b$ and
it admits no consecutive $a$. We obtain $M_p \ge M(c_-) \ge
abbM(f^2(1)) \ge ab^2$. By Lemma \ref{baj}, $ab^2>1+b$. As a result,
$$
(M_p-1)(M_q-1) \ge (ab^2-1)(ab-1)>b(ab-1)>1.
$$
\vspace{0.2cm}

(ii) {\bf Subcase $c_*<f(0)<P_L$.}\vspace{0.2cm}

In this case, there exist $k \ge 1$ so that $f(0) \in (c_{k-1},\
c_k]$. Since $f(x)>f(0)$ for each $x \in A$, by Lemma \ref{M(x)}, we
know that $M(x_j) \ge 1$ for $j=1, 2, \ldots, s-1$, and $M(y_j) \ge
1$ for $j=1, 2, \ldots, t-1$. We have
$$M_q=M(c_+)M(x_1)\cdots
M(x_{s-1})W(x_s) \ge M(c_+)M(x_1)=aba^{k+1}b^k.$$
$$M_p\ge M(c_-)=ab M(f(1)) \ge ab.$$
Using Lemma \ref{inequality}, we conclude that
$$
(M_p-1)(M_q-1) \ge (aba^{k+1}b^k-1)(ab-1)>a^{k+1}b^k(ab-1)>1.
$$

\vspace{0.2cm} (iii) {\bf Subcase $f(0) \le c_*$.}\vspace{0.2cm}

Let $i$ be the minimal positive integer so that $f^i(0)>c$. Each
orbit can stay on the left of $c$ at most $i$ consecutive times. At
first, we conclude that
\begin{equation}\label{bai}
M(x) \ge ba^i \ \ \ \ \ \ \ \ \ \ x >c_*.
\end{equation}
In fact, one can write $M(x)=aUba^m$, where $m \le i-1$, and $U\ge
1$ because $U$ begin with $b$ and it admits no consecutive $a$. So
we have $M(x) \ge aba^{i-1}=ba^i$ because and $a<1$.

In what follows, we shall prove
\begin{equation}\label{length-3}
M_q \ge ba^{i-1},\ \ \ \ \ \ \ M_p \ge ba.
\end{equation}

\vspace{0.2cm}

{\bf Claim 1:} $M_q \ge ba^{i-1}$. \vspace{0.2cm}

Claim 1 will be proved in two separated cases: $ba^i \ge 1$ and
$ba^{i}<1 $.

Suppose $ba^i \ge 1$. By  Lemma \ref{decomposition}, Lemma
\ref{M(x)} and (\ref{bai}),
$$M_q=\prod_{m=0}^{r-1}f'(f^m(c_+))=M(c_+)M(x_1)\cdots M(x_{s-1})W(x_s)\ge M(c_+)=ba^{i-1}.$$

Now we suppose $ba^i<1$. By Lemma \ref{baj}, we know that
$ba^{i-1}>1+a+\cdots+a^{i-2}>1$. By Lemma \ref{decomposition},
$$
M_q =\prod_{m=0}^{r-1}f'(f^m(c_+))=M(c_+)M(x_1)\cdots M(x_{s-1})
W(x_s),
$$
where $E=f'(x_s)f'(f(x_s))\cdots f'(f^{r-1}(c_+)) \ge 1$.

In what follows we show that
$$M=M(x_1)M(x_2)\cdots M(x_{s-1}) \ge
1,$$ which implies our Claim $M_q \ge M(c_+)=ba^{i-1}$.

By Lemma \ref{M(x)-1}, $M(x_j) \ge 1$ for all $x_j>c_1$. So $M \ge
1$ if there is no $x_j$ is smaller than $c_1$.

Suppose there are some $j$ so that $M(x_j)<1$. We denote them as
$j_1<j_2<\cdots$. According to Lemma \ref{control}, $ba^i<1$ implies
$c_1<f^{i-1}(0)<c$. Using Lemma \ref{M(x)-1} we get $M(x_1)>1$. As a
result, we have $j_1>1$. By Lemma \ref{control}, we know that
$x_{j_1} \in (c_*,\ c_1]$, and $M(x_{j_1})=ba^i$ because each orbit
can stay on the left of $c$ at most $i$ consecutive times and
$ba^{i-1}>1$.

Let $k_1=max\{t: x_t>c_1, t<j_1\}$. It follows from Lemma
\ref{control} and Lemma \ref{M(x)-1} that $1\le k_1<j_1$ and
$x_{k_1}>c_1$, which, together with $ab>1$, implies $M(x_{k_1}) \ge
ababa^m$. Moreover, we conclude that $m<i-1$, because $m=i-1$
implies $x_{k_1}>c_1$ by Lemma \ref{control}. We obtain $M(x_{k_1})
\ge ababa^{i-2}$. Therefore, $M(x_{k_1})M(x_{j_1}) \ge
ababa^{i-2}ba^i=(ba^2)(ba^{i-1})(ba^{i-1}) \ge 1$.

By similar arguments, one can find $j_1<k_2<j_2$ so that
$M(x_{k_2})M(x_{j_2}) \ge 1$. Repeat the above procedures several
times if possible, we conclude that $M \ge 1$. Therefore, $M_q \ge
M(c_+)=ba^{i-1}$.

Claim 1 is true.

\vspace{0.2cm} {\bf Claim 2:} $M_p\ge ab.$\vspace{0.2cm}

Since the orbit
$$O_{\ell}(c_-)=\{c_-,\ 1,\ f(1),\ \ldots,\ f^{\ell-1}(c_-) \}$$ exits
$A$ exact $s (\ge 0)$ times, and the first point after $j$th exit is
$y_j$, we conclude that
\begin{equation} 
\begin{split}
M_p=(f^{\ell})'(c_-)&=M(c_-)M(y_1)\cdots M(y_{t-1}) W \\
 &=abM(f(1))M(y_1)\cdots M(y_{t-1}) W
 \end{split}
 \end{equation}
where $W=f'(y_t)f'(f(y_t))\cdots f'(f^{\ell-1}(c_-))$.

Using the same arguments in the proof of Claim 1, one can show that
both $M(f(1))M(y_1)\cdots M(y_{t-1})$ and $W$ are greater than $1$.
Claim 2 holds. \vspace{0.3cm}

Using (\ref{length-3}) and Lemma \ref{baj},
$$
(M_p-1)(M_q-1) \ge (ab-1)(ba^{i-1}-1)>(ab-1)a^{i-2}>1.
$$
So the expansion of nice pair (\ref{expansion}) is proved when
$a<1<b$.

 \vspace{0.2cm}
{\bf Case C:} $a>1>b$.
 \vspace{0.2cm}

One can adapt the proof of the case $a<1<b$ to this case by using
the first exit decomposition of $M_p$ and $M_q$ with respect to the
set $B=[c^*,\ 1]$. $\hfill \Box$



\section{Proof of Main Theorem' }\ \

Now we are ready to prove the Main Theorem', which, together with
Proposition 1, implies our Main Theorem.

\begin{proof}

It is proved in \cite{DFY} that a piecewise linear Lorenz map that
expand on average is always expanding. During the proof, we denote
the piecewise linear Lorenz map $f_{a,b,c}$ by $f \in \mathcal {L}$.

\vspace{0.2cm} {\bf Step 1.} Since the renormalization of piecewise
linear Lorenz map is still piecewise linear, in order to prove each
renormalization of $f$ is periodic, it is necessary to show that the
minimal renormalization of any renormalizable piecewise linear
Lorenz map is always periodic.

If $f$ does not satisfy the additional condition $1-ac<c<b(1-c)$, by
Proposition \ref{trivial renormalization}, there is an expanding
Lorenz map $g$ with minimal period $\kappa(g)<\kappa(f)$, such that
$f$ is renormalizable if and only if $g$ is renormalizable, and if
$f$ is renormalizable, then minimal renormalization of $f$ is
periodic if and only if the minimal renormalization of $g$ is
periodic. Furthermore, since $f$ is piecewise linear with
$ac+b(1-c)>1$, $g \in \mathcal {L}$.

Applying Proposition  \ref{trivial renormalization} several times if
necessary, we can assume that $\kappa(f) \le 2$ (see Corollary
\ref{cor}). It follows from Proposition  \ref{trivial
renormalization} that $f \in \mathcal {L}$ can not be renormalized
trivially if and only if either $\kappa(f)=1$ or $f \in \mathcal
{L}_2$. Since any expanding Lorenz map with $\kappa(f)=1$ is prime,
we only need to consider the case $f \in \mathcal {L}_2$.

\vspace{0.2cm}{\bf Step 2.} Suppose that $f \in \mathcal {L}_2$. Let
$O=\{P_L, \ P_R\}$  be the 2-periodic points of $f$, and
$P_L<c<P_R$, $D=\bigcup_{n\ge 0}f^{-n}(O)$ be the minimal completely
invariant closed set of $f$. We shall prove $f$ is prime if $D \neq
O$ by contradiction.

Now suppose $f$ is not prime, according to Theorem A in \cite{D},
the minimal renormalization map of $f$ is $R f$,
\begin{eqnarray*} 
R f(x)=\left \{ \begin{array}{ll}
f^{\ell}(x) & x \in [f^{r}(c_+),  c) \\
f^{r}(x) & x \in (c,  f^{\ell}(c_-) ],
\end{array}
\right.
\end{eqnarray*}
where $p=\sup \{x<c: x \in D\},\   q=\inf \{x>c: x \in D\}$, and
$\ell$ and $r$ are the maximal integers so that $f^{\ell}$ and
 $f^r$ is continuous on $(p,\ c)$ and $(c,\ q)$, respectively.
Obviously, $\{p,\ q\}$ is a nice pair.

Put $L=(p,\ c)$, $R=(c,\ q)$, $M_p=(f^{\ell})'(p)$ and
$M_q=(f^r)'(q)$. Since $Rf$ is a piecewise linear Lorenz map, we
have
\begin{eqnarray*}
|f^{\ell}(L)|=|f^{\ell}((p,\ c))|&=&|(p,\ f^{\ell}(c_-))|=M_p|L|\leq |L|\ +\ |R| \\
 |f^{r}(R)|= |f^{r}((c,\ q))|&=&|(f^{r}(c_+),\ q)|=M_q|R|\leq |L|\ +\
 |R|,
\end{eqnarray*}
which implies
\begin{equation}\label{ineqn0}
(M_p-1)(M_q-1) \le 1.
\end{equation}

On the other hand, if $D \neq O$, then $[f(0),\ f(1)] \nsubseteq
[P_L,\ P_R]$ by (\ref{periodic renormalization}). According to
Theorem \ref{lower bound}, we have
$$(M_p-1)(M_q-1)>1$$
because $\{p,\ q\}$ is a nice pair. We obtain a contradiction.

It follows that $f$ is prime if $D \neq O$. So we conclude that the
minimal renormalization of $f$ is periodic. As a result, each
renormalization of $f$ is periodic.

\vspace{0.2cm}{\bf Step 3.} Now we show that $f$ can only be
renormalized finite times. If $f$ is renormalizable, then the
minimal renormalization $Rf$ is a $\beta$-transformation because
$Rf$ is a periodic renormalization indicates $M_p=M_q$. So $g:=Rf$
is a $\beta$-transformation with slope $M_p$, which can be
renormalized at most finite times by (\ref{ineqn0}).   As a result,
$f$ can be renormalized at most finite times.

\end{proof}

\vspace{0.3cm}

{\bf Acknowledgements:}  This work is partially supported by a grant
from the Spanish Ministry (No.SB2004-0149) and grants from NSFC
(Nos.60534080,70571079) in China. Ding thanks Centre de Recerca
Matem$\grave{a}$tica for the hospitality and facilities.

\vspace{0.3cm}

\end{document}